\documentclass[11pt,twoside]{amsart}

\usepackage{amsfonts}
\usepackage{hyperref}
\usepackage{etoolbox}
\usepackage{amsmath}
\usepackage{amssymb}
\usepackage{amsthm}
\usepackage{dsfont}
\usepackage{pifont}
\usepackage{mathrsfs}
\usepackage{graphicx}
\usepackage{enumerate}
\usepackage[all]{xy}
\usepackage{geometry}
\usepackage{amsfonts}
\usepackage{fancyhdr}
\usepackage{tikz}
\usetikzlibrary{arrows}

\usepackage{amsaddr}

\usepackage[backend=biber,style=alphabetic,sorting=nty,maxnames=5]{biblatex}
\renewbibmacro{in:}{%
  \ifentrytype{article}{}{\printtext{\bibstring{in}\intitlepunct}}}
\addbibresource{references.bib}

\newcommand{\Aa}{\mathcal{A}}

\newcommand{\Cc}{\mathcal{C}}

\newcommand{\Gg}{\mathcal{G}}

\newcommand{\Ss}{\mathcal{S}}

\newcommand{\C}{\mathbb{C}}

\newcommand{\N}{\mathbb{N}}

\newcommand{\Z}{\mathbb{Z}}

\renewcommand{\ker}{\mathrm{Ker~}}

\newcommand{\Gl}{\mathrm{GL}}

\newcommand{\infl}{\ar@{{(}->}}
\newcommand{\defl}{\ar@{->>}}

\newcommand{\intv}[1]{[\![#1]\!]}
\newcommand{\muls}[1]{ \left\{\kern-0.6em \left\{ #1\right\}\kern-0.6em \right\} }

\newcommand{\epsi}{\varepsilon}

\newcommand{\xdownarrow}[1]{%
  {\left\downarrow\vbox to #1{}\right.\kern-\nulldelimiterspace}
}

\newcommand{\spa}{\vspace*{1ex}}

\newcommand{\bbar}[1]{\overline{#1}}
\newcommand{\hhat}[1]{\widehat{#1}}
\newcommand{\ttilde}[1]{\widetilde{#1}}
\newcommand{\nit}[1]{\textbf{\emph{#1}}}

\theoremstyle{plain}
\newtheorem{prop}{Proposition}[section]
\newtheorem{prop-def}[prop]{Proposition-Definition}

\newtheorem{lem}[prop]{Lemma}
\newtheorem{theo}[prop]{Theorem}
\newtheorem{cor}[prop]{Corollary}
\newtheorem{rem}[prop]{Remark}
\newtheorem{definition}[prop]{Definition}

\setlength{\textwidth}{15.3cm} \setlength{\textheight}{24.2cm}
\setlength{\topmargin}{-1.5cm} \setlength{\oddsidemargin}{-1mm}
\setlength{\evensidemargin}{-1mm}
\setlength{\abovedisplayskip}{3mm}
\setlength{\belowdisplayskip}{3mm}
\setlength{\abovedisplayshortskip}{0mm}
\setlength{\belowdisplayshortskip}{2mm} \normalbaselines
\raggedbottom

\makeatletter
\@namedef{subjclassname@2020}{\textup{2020} Mathematics Subject Classification}
\makeatother

\newcommand{\wid}{\mathrm{wd}}

\title{Regular theory in complex braid groups}
\author{Owen Garnier}
\address{LAMFA, Université de Picardie Jules Verne, CNRS UMR 7352,\\ 33, rue Saint-Leu, 80000, Amiens, France.}
\email{o.garnier@u-picardie.fr}
\date{\today}
\subjclass[2020]{Primary 20F36 ; Secondary 20F55}
\keywords{Garside categories, Braid groups, Complex reflection groups, Regular elements}

\begin{document}

\begin{abstract}
In his seminal paper \cite{beskpi1}, Bessis introduces a Garside structure for the braid group of a well-generated irreducible complex reflection group. Using this Garside structure, he establishes a strong connection between regular elements in the reflection group, and roots of the ``full twist'' element of the pure braid group.

He then suggests that it would be possible to extend the conclusion of this theorem to centralizers of regular elements in well-generated groups.
In this paper we give a positive answer to this question and we show moreover that these results hold for an arbitrary reflection group. As a byproduct, we get a generalization of a theorem from Shvartsman regarding the torsion of the quotient of an irreducible braid group by its center.
\end{abstract}
\maketitle

\tableofcontents

\section{Introduction}

\subsection{Reflection groups, braid groups} We refer to \cite{lehrertaylor} for classical results on complex reflection groups. Let $V$ be a finite-dimensional complex vector space. An element $s\in \Gl(V)$ is called a \nit{(pseudo-)reflection} if $\ker(s-\mathrm{Id}_V)$ is a hyperplane of $V$. We call $\ker(s-\mathrm{Id}_V)$ the \nit{reflecting hyperplane} of $s$. A subgroup $W\leqslant \Gl(V)$ is called a \nit{complex reflection group} if it is finite and generated by reflections of $V$. We say that $n=\dim(V)$ is the \nit{rank} of $W$.

We say that an irreducible complex reflection group $W\leqslant \Gl(V)$ of rank $n$ is \nit{well-generated} if it can be generated by a set of $n$ reflections. Otherwise, we say that $W$ is \nit{badly-generated}.

To a complex reflection group is associated a hyperplane arrangement $\Aa$, defined as the set of reflecting hyperplanes of the reflections of $W$. It is a well-known fact that $W$ acts freely on $X=X(W):=V\setminus \bigcup \Aa$, where $\bigcup \Aa$ denotes the union of the hyperplanes belonging to $\Aa$. We can then define $P(W)=\pi_1(X)$ the \nit{pure braid group} of $W$, and $B(W)=\pi_1(X/W)$ the \nit{braid group} of $W$. The action of $W$ on $X$ gives a covering map $X\twoheadrightarrow X/W$, which in turn induces a short exact sequence.
\[1\to P(W)\to B(W)\to W\to 1\]
We will denote by $\pi$ the projection $B(W)\to W$. To simplify the notation, the orbit of some element $x\in V$ will be denoted by $\bbar{x}\in V/W$.

The definitions we use imply that a complex reflection group is always seen as acting on a finite-dimensional complex vector space. In particular an isomorphism between two reflection groups $W\leqslant \Gl(V)$ and $W'\leqslant \Gl(V')$ is an isomorphism of vector space between $V$ and $V'$, which in turn should induce an isomorphism of groups between $W$ and $W'$. Finite irreducible complex reflection groups were classified (up to isomorphism) by Shephard and Todd:
\begin{enumerate}[\quad -]
\item A general infinite family $G(de,e,n)$, defined for positive integral parameters $d,e,n$, that we call the infinite series.
\item 34 exceptional groups $G_4,\ldots, G_{37}$.
\end{enumerate}
These groups may be separated in two different classes, depending on wether or not they are well generated. In his article \cite{beskpi1}, Bessis extensively studied the case of braid groups of well-generated irreducible reflection groups.
\subsection{Regular elements}
The theory of regular elements of complex reflection groups has been introduced in \cite{sprireg}. Let $d$ be a positive integer. We denote by $\mu_d$ the set of $d$-th roots of unity in $\C$, and $\zeta_d:=e^{\frac{2i\pi}{d}}$.

\begin{definition}
Let $W\leqslant \Gl(V)$ be a complex reflection group and $d$ a positive integer. An element $g\in W$ is called \nit{$\zeta_d$-regular} if it admits regular eigenvectors for the eigenvalue $\zeta_d$, that is if $\ker(g-\zeta_d)\cap X\neq \varnothing$.
\end{definition}

One of the main results of this theory is that $\zeta_d$-regular elements, should they exist, are all conjugate in $W$. Furthermore, for $g$ a $\zeta_d$-regular element, the centralizer $G:=C_W(g)$ of $g$ in $W$ is again a complex reflection group acting on the eigenspace $\ker(g-\zeta_d)$. Let $W'\leqslant \Gl(V')$ be a complex reflection group. We say that $W'$ admits a \nit{regular embedding} in a complex reflection group $W$ if there is some $\zeta_d$-regular element $g$ in $W$ such that $W'$ acting on $V'$ is isomorphic to $C_W(g)$ acting on $\ker(g-\zeta_d)$.

This allows the study of some badly-generated complex reflection groups by means of finding a regular embedding in a well-generated reflection group. A good example of this technique is the embedding of the exceptional group $G_{31}$ in the complexified Coxeter group $G_{37}$, as the centralizer of a $4$-regular element (cf \cite[Example 1.10]{beskpi1}).

In their article \cite[Section 3.B]{brmi}, Broué and Michel predicted the existence of an analogue of Springer regular elements in braid groups: such elements would in turn allow the study of braid groups of some badly-generated reflection groups. A major step towards finding such an analogue is achieved in \cite[Theorem 12.4]{beskpi1}, for the case of well-generated reflection groups. This paper extends the conclusions of Bessis to all complex reflection groups.

In  \cite[Notation 2.3]{bmr}, Broué-Malle-Rouquier consider a particular central element in the pure braid group $P(W)$, represented by the loop $t\mapsto e^{2i\pi t}x_0$ (where $x_0 \in X$ is a basepoint for the pure braid group). It was later shown that, as they conjectured, the center of $P$ is cyclic and generated by this element, that we will denote $z_P$ from now on. Our main theorem is

\begin{theo}Let $W\leqslant \Gl(V)$ be a complex reflection group, and $d$ be a positive integer.
\begin{enumerate}[\quad (A)]
\item There exist $d$-th roots of $z_P$ if and only if $d$ is regular.
\item When $d$ is regular, the $d$-th roots of $z_P$ form a conjugacy class of $B(W)$, and they are mapped to $\zeta_d$-regular elements in $W$.
\item Let $d$ be a regular number. Let also $\rho$ be a $d$-th root of $z_P$, and $w$ its image in $W$. The centralizer $C_{B(W)}(\rho)$ is isomorphic to the braid group $B(W')$ of the centralizer $W':=C_W(w)$.
\end{enumerate}
\end{theo}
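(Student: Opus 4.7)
The plan is to reduce to Bessis's \cite[Theorem~12.4]{beskpi1}, which gives all three assertions in the irreducible well-generated case, by combining decomposition into irreducible factors with the method of regular embeddings into well-generated groups.

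For a general reflection group $W = W_1\times\cdots\times W_k$ decomposed into irreducibles, one has $B(W) \simeq \prod B(W_i)$, $P(W) \simeq \prod P(W_i)$, and $z_P = \prod z_{P_i}$; a $d$-th root of $z_P$ is a tuple of $d$-th roots of the $z_{P_i}$, conjugacy and centralizers decompose, and $\zeta_d$-regularity is tested factor by factor (the eigenspace of a product element is the direct sum of factor eigenspaces, and a regular vector must avoid all hyperplanes, forcing regularity on each factor). Thus (A), (B) and (C) are all stable under direct products, so it suffices to treat irreducibles. Bessis settles the well-generated irreducible case, leaving $G_{31}$ and the $G(de,e,n)$ with $d,e>1$ as the remaining badly-generated irreducibles.

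For each such $W$, I use a regular embedding $W = C_{W_0}(w_0) \hookrightarrow W_0$ into an irreducible well-generated group $W_0$, with $w_0$ a $\zeta_{e_0}$-regular element (for instance $G_{31}\subset G_{37}$ from \cite[Example~1.10]{beskpi1}; similar embeddings are available inside the infinite series). Applying Bessis's part~(C) to $W_0$ yields an isomorphism $\iota\colon B(W)\xrightarrow{\sim} C_{B_0}(\rho_0)$, where $\rho_0$ is an $e_0$-th root of $z_{P_0}$ lifting $w_0$. The geometric input is that the full twist $z_P$, represented by the loop $t\mapsto e^{2i\pi t}x_0'$ with $x_0'\in V':=\ker(w_0-\zeta_{e_0})$, also represents a central element of $P(W_0)$, so $\iota(z_P)$ can be identified as a specific power of $\rho_0$ inside $C_{B_0}(\rho_0)$. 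Under this dictionary, $d$-th roots of $z_P$ in $B(W)$ correspond bijectively to roots of $z_{P_0}$ in $B_0$ that commute with $\rho_0$.

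Assertion (A) then follows from Bessis's (A) for $W_0$ combined with Springer's theory (\cite{sprireg}, cf.~\cite[\S1.8]{beskpi1}) of compatibility between regular eigenspaces of $W_0$ and regular eigenspaces of $W = C_{W_0}(w_0)$ acting on $V'$. For (B), Bessis's (B) gives a single $B_0$-conjugacy class of roots; the conjugator can be adjusted within the coset of $\rho_0$-commuters so that it lies inside $C_{B_0}(\rho_0)\simeq B(W)$, giving conjugacy already in $B(W)$. For (C), Bessis's (C) applied to $W_0$ yields $C_{B_0}(\rho)\simeq B(C_{W_0}(\bbar{\rho}))$; intersecting with $C_{B_0}(\rho_0)$ on both sides gives $C_{B(W)}(\rho)$ on the left and, using the equality of double centralizers $C_{W_0}(\bbar{\rho})\cap C_{W_0}(w_0) = C_W(\bbar{\rho}|_{V'})$, the braid group $B(C_W(w))$ on the right. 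The main obstacle will be the precise identification of $\iota(z_P)$ in terms of $\rho_0$ and $z_{P_0}$, together with the verification that the double-centralizer operations and Springer's restriction of regular eigenspaces transport cleanly to the braid-theoretic setting; once these compatibilities are in place, the three assertions for $W$ fall out of Bessis's theorem applied to the ambient well-generated $W_0$.
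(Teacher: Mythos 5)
There is a genuine gap: the reduction to regular embeddings into well-generated groups does not cover the cases that actually need proving. Among the badly-generated irreducible groups, only $G_{22}$ and $G_{31}$ arise as centralizers of regular elements inside well-generated groups (as centralizers in $G_{30}$ and $G_{37}$ respectively); the remaining badly-generated exceptionals $G_7, G_{11}, G_{12}, G_{13}, G_{15}, G_{19}$, which you have omitted from your list, admit no such regular embedding, so Bessis's part (C) for an ambient $W_0$ is simply not available for them. Moreover, your claim that ``similar embeddings are available inside the infinite series'' is false: $G(de,e,n)$ with $d,e\geqslant 2$ has the same rank $n$ as $G(de,1,n)$, whereas the centralizer of a non-central regular element acts on a proper eigenspace of strictly smaller dimension; so $G(de,e,n)$ is an index-$e$ \emph{normal subgroup} of the well-generated $G(de,1,n)$, not the centralizer of a regular element in it. The paper's treatment of the infinite series is accordingly a different argument: one shows every periodic element of $B(de,1,n)$ is conjugate to a power of the dual Garside element $\varepsilon$, and then uses the winding-number morphism $\wid_e$ (whose kernel is $B(de,e,n)$) to push conjugators and centralizer computations down to the normal subgroup. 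The leftover exceptional cases are then handled by transporting the result along isodiscriminantality isomorphisms ($G_7, G_{11}, G_{15}, G_{19}$ paired with $G(12,2,2), G(24,2,2), G(24,4,2), G(60,2,2)$), and by direct Garside-theoretic computations with divided categories for $G_{12}$ and $G_{13}$, for which no reduction is known.

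The parts of your proposal that do work are the reduction to irreducible factors (standard, and fine) and the heredity of $(A),(B),(C)$ under passage to centralizers of regular elements; the latter is essentially Section 5 of the paper, where the ``obstacle'' you flag (identifying $\iota(z_P)$ and organizing the double centralizers) is resolved by the construction $q(\rho)=\rho^v\delta^u$ of Lemma 5.1, which produces a common root of $\rho$ and $\delta$ and yields $C_{B(G)}(\rho)=C_{B(W)}(q(\rho))$. But as it stands your argument proves the theorem only for $G_{22}$, $G_{31}$, and groups already covered by Bessis, so the core of the badly-generated case is missing.
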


A more precise version of property $(C)$ will be given in Proposition \ref{propc} below. In the following we will show that properties $(A),(B)$ and $(C)$ hold for different classes of groups. 

We already know from \cite[Theorem 12.4]{beskpi1} that $(A),(B)$ and $(C)$ hold for a well-generated irreducible reflection group. So it only remains to show that $(A),(B),(C)$ hold for badly-generated reflection groups. Our proof consists of the following steps:

After recalling known useful results in Sections 2 and 3, we prove in Section 4 that properties $(A),(B),(C)$ hold for the members of the infinite family that are badly-generated (that is the $G(de,e,n)$ where $d\geqslant 2$ and $e\geqslant 2$). We then show in Section $5$ that, if $W$ is a reflection group for which $(A),(B),(C)$ hold, then $(A),(B),(C)$ also hold for every centralizer of a regular element of $W$. In Section 6, we show that if $W$ and $W'$ are reflection groups with the same  (reflection) degrees and (reflection) codegrees, then properties $(A),(B),(C)$ for $W$ imply properties $(A),(B),(C)$ for $W'$. Lastly, the two remaining groups, $G_{12}$ and $G_{13}$, are studied separately in Section $7$.

This work will be a part of my PhD thesis, done under the supervision of Ivan Marin. I thank him very much for his precious help, especially in Section \ref{serieinf}.

\section{Preliminary results}
From now on, $W$ will denote an irreducible complex reflection group, $\Aa$ its hyperplane arrangement, and $d$ a positive integer. We also choose a basepoint $x_0$ in $X$ and we set 
\[P(W):=\pi_1(X,x_0),~~B(W):=\pi_1(X/W,\bbar{x_0})\]
If $x_1$ is another basepoint, then since $X$ is path-connected, we can always consider a path from $x_0$ to $x_1$. Such a path induces an isomorphism of short exact sequences
\[\xymatrix{1\ar[r]&\ar[r] P(W)\ar[d]^\simeq \ar[r] & B(W)\ar[r] \ar[d]^\simeq &W \ar[r] \ar@{=}[d] & 1\\ 1\ar[r] &\pi_1(X,x_1)\ar[r] & \pi_1(X/W,\bbar{x_1}) \ar[r] & W\ar[r] & 1}\]
which shows that the change of basepoint in $X$ yields an isomorphism of groups over $W$ between $\pi_1(X/W,\bbar{x_1})$ and $B(W)=\pi_1(X/W,\bbar{x_0})$.
\subsection{Centers of complex braid groups}
A long-standing task in braid group theory has been to determine the centers of both $B(W)$ and $P(W)$. A final statement has been obtained in \cite{dmm}, completing previous results and conjectures (see for instance \cite[Theorem 2.24]{bmr} and \cite[Theorem 12.8]{beskpi1}):

\begin{theo}(\cite[Theorems 1.1, 1.2, 1.3]{dmm})\newline Assume that $W$ is irreducible. The centers of $P(W),B(W)$ and $W$ are cyclic, respectively generated by elements $z_P,z_B,\pi(z_B)$ satisfying $z_B^{|Z(W)|}=z_P$. In particular, we have a short exact sequence
\[1\to Z(P(W))\to Z(B(W))\to Z(W)\to 1.\]
\end{theo}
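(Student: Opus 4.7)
The plan is to construct the three central elements explicitly using the $\C^*$-action on $V$, derive the relation $z_B^{|Z(W)|}=z_P$ geometrically, establish centrality by a functoriality argument, and then address the cyclicity statement; the exact sequence will follow formally.

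Since $\bigcup\Aa$ is a union of hyperplanes through the origin, $\C^*$ acts on $X$ and this action commutes with that of $W$. By Schur's lemma applied to the irreducible representation $W\curvearrowright V$, the center $Z(W)$ consists of scalars, so it is cyclic of order $n:=|Z(W)|$, generated by $\zeta_n\cdot\mathrm{Id}$. The loop $t\mapsto e^{2i\pi t}x_0$ in $X$ represents $z_P\in P(W)$ by definition, while the path $t\mapsto e^{2i\pi t/n}x_0$ descends to a loop in $X/W$ whose class I will take as $z_B\in B(W)$. Concatenating $n$ copies of this loop recovers the loop representing $z_P$, so $z_B^n=z_P$ inside $B(W)$, and the image of $z_B$ in $W$ is $\zeta_n\cdot\mathrm{Id}$, a generator of $Z(W)$. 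To see $z_P$ is central, I would use the action map $\alpha\colon\C^*\times X\to X$: it induces a morphism $\pi_1(\C^*,1)\times\pi_1(X,x_0)\to P(W)$ in which the two factors have commuting images; the generator of $\pi_1(\C^*)$ is sent to $z_P$, while the restriction to $\pi_1(X,x_0)$ is the identity, so $z_P$ commutes with every element of $P(W)$. The same argument applied to the descended $\C^*$-action on $X/W$ shows $z_B\in Z(B(W))$.

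The hard step is the cyclicity of $Z(P(W))$: everything else reduces to this. Indeed, from $Z(P(W))=\langle z_P\rangle$ together with the $W$-invariance of $z_P$ (a consequence of the commuting actions, which also gives $z_P\in Z(B(W))$), one obtains $Z(B(W))\cap P(W)=Z(P(W))$ and an injection $Z(B(W))/\langle z_P\rangle\hookrightarrow Z(W)$ of cyclic groups induced by $B(W)\twoheadrightarrow W$; since $z_B$ already maps onto a generator of $Z(W)$, the injection is an isomorphism and $Z(B(W))=\langle z_B\rangle$. To establish the cyclicity of $Z(P(W))$ itself I would proceed case-by-case along the Shephard--Todd classification. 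For well-generated irreducible $W$, Bessis' dual Garside structure presents $B(W)$ in such a way that $z_B$ is a power of the Garside element, and general Garside theory then yields that $Z(B(W))$, and hence $Z(P(W))$, is cyclic. For the imprimitive badly-generated groups $G(de,e,n)$, the explicit BMR presentations allow a direct computation (for instance by examining the kernel of the natural map to an abelianization or to a smaller well-generated braid group). The remaining exceptional groups would be handled individually, relying on their known presentations.

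Finally, the exact sequence
\[1\to Z(P(W))\to Z(B(W))\to Z(W)\to 1\]
follows formally: $Z(P(W))\subseteq Z(B(W))$ was just noted, and the composition $Z(B(W))\hookrightarrow B(W)\twoheadrightarrow W$ has image in $Z(W)$ (any lift of a central element of $B(W)$ commutes modulo $P(W)$ with lifts of arbitrary elements of $W$) and sends $z_B$ to the generator $\zeta_n\cdot\mathrm{Id}$, so the induced map $Z(B(W))\to Z(W)$ is surjective with kernel $\langle z_B^n\rangle=\langle z_P\rangle=Z(P(W))$. The sole obstacle is thus the classification-dependent cyclicity assertion; the construction, centrality, and the exact sequence itself reduce to clean topology and diagram chasing.
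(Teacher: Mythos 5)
This statement is not proved in the paper at all: it is quoted verbatim from Digne--Marin--Michel (the cited \cite[Theorems 1.1, 1.2, 1.3]{dmm}), so the comparison here is between your attempt and the actual literature. Your formal skeleton is sound and matches the standard treatment: the explicit loops for $z_P$ and $z_B$, the relation $z_B^{|Z(W)|}=z_P$ by concatenation, centrality of $z_P$ via the Eckmann--Hilton argument for the $\C^*$-action (for $z_B$ you need the slightly sharper observation that the action on $X/W$ factors through $\C^*/\mu_n\cong\C^*$, whose fundamental group hits $z_B$ rather than $z_P$), and the reduction of the cyclicity of $Z(B(W))$ and of the exact sequence to the single claim $Z(P(W))=\langle z_P\rangle$. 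All of that diagram chasing is correct.

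The genuine gap is that this single remaining claim is the entire theorem, and your sketch of it does not go through as stated. First, for well-generated groups you assert that Garside theory gives $Z(B(W))$ cyclic ``and hence'' $Z(P(W))$ cyclic; but a finite-index subgroup of a group with cyclic center need not have cyclic center, and $Z(P(W))\supseteq Z(B(W))\cap P(W)$ is the trivial inclusion --- the reverse containment is what must be proved. This also contradicts your own earlier (correct) reduction, which goes in the opposite direction. Second, the ``remaining exceptional groups handled individually via their known presentations'' step is precisely where the problem sat open for years: for $G_{24}$, $G_{27}$, $G_{29}$, $G_{31}$, $G_{33}$, $G_{34}$ no presentation-based or Garside-based argument was available, and the proof in \cite{dmm} has to combine Springer's theory of regular elements with Bessis' results (notably for $G_{31}$). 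So the proposal correctly isolates the difficulty but does not overcome it; as a self-contained proof it fails exactly where the cited theorem is hard.
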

The element $z_P$ is called the \nit{full-twist} of $W$, and it has a topological origin: it is the homotopy class of the loop $z_{x_0}:t\mapsto e^{2i\pi t}x_0$. If $x_1$ is another basepoint, and $\gamma$ is a path from $x_0$ to $x_1$, then it is easy to show that the concatenated paths $\gamma * z_{x_1}$ and $z_{x_0} * \gamma$ are homotopic. This means that the isomorphism between $B(W)=\pi_1(X/W,\bbar{x_0})$ and $\pi_1(X/W,\bbar{x_1})$ sends $z_P$ to the homotopy class of $z_{x_1}$. This allows us to define $z_P$ without mentioning the basepoint.

This full-twist will play a key role in defining analogues to regular elements, which will be the roots of $z_P$. The first important task is to define, for regular elements, lifts which are roots of $z_P$. We recall this construction now.
 
Let $g\in W$ be a $\zeta_d$-regular element and $x\in X$ be a $\zeta_d$-regular eigenvector for $g$. We define a path in $X$ by
\[\ttilde{g}:t\mapsto e^{\frac{2i\pi t}{d}}x\]
where $t$ ranges between $0$ and $1$. Since this path ends at $\zeta_d x=g.x$, it induces a well-defined element of $\pi_1(X/W,\bbar{x})$. This element is a $d$-th root of $z_P$ and a lift of $g$.
Now if $y\in X$ is another basepoint, then we choose a path from $x$ to $y$ in $X$. We get an isomorphism $\pi_1(X/W,\bbar{x})\simeq \pi_1(X/W,\bbar{y})$, which sends the homotopy class $[\ttilde{g}]$ of $\ttilde{g}$ to an element of $\pi_1(X/W,\bbar{y})$, which is also a $d$-th root of $z_P$ and a lift of $g$. 

 In general, a lift of $g$ in $\pi_1(X/W,\bbar{y})$ which is a $d$-th root of $z_P$ will be called a \nit{regular lift} of $g$ (in $\pi_1(X/W,\bbar{y})$).

\subsection{A precise rephrasing of property $(C)$}
The isomorphism whose existence is asserted by property $(C)$ is explicit, and can be constructed as follows:

Let $g$ be a $\zeta_d$-regular element in $W$. Let $V_g=\ker(g-\zeta_d)$ be the space on which $G:=C_W(g)$ acts as a complex reflection group. The hyperplane arrangement for $G$ acting on $V_g$ is $\Aa_g:=\{H\cap V_g~|~ H\in \Aa\}$ where $\Aa$ is the hyperplane arrangement for $W$. We deduce that
\[X_g:=V_g\setminus\bigcup \Aa_g=V_g\setminus\left(\left(\bigcup \Aa\right) \cap V_g\right)=V_g\cap X.\]
The embedding $X_g\to X$ induces a homeomorphism $p:X_g/G\to (X/W)^{\mu_d}$: Denoting $\hhat{x}=G.x$ and $\bbar{x}:=W.x$, we have $p(\hhat{x})=\bbar{x}$. The inverse bijection of $p$ is a little bit harder to compute. Let $\bbar{x}\in (X/W)^{\mu_d}$, this means that $\zeta_d\bbar{x}=\bbar{\zeta_d x}=\bbar{x}$, so there is some $g'\in W$ such that $g'(x)=\zeta_d x$. Since $x\in X$, we get that $g'$ is also a $\zeta_d$-regular element of $W$. We obtain that $w^{-1}gw=g'$ for some $w\in W$, so 
\[w^{-1}gw.x=\zeta_d x\Rightarrow g.(w.x)=\zeta_d(w.x)\]
this proves $w.x\in X_g$, and so $p^{-1}(\bbar{x})=\hhat{w.x}$. The following lemma is well-known (see for instance \cite[Theorem-Assumption 2.25]{broue}). We give a detailed proof here for the sake of clarity.

\spa\begin{lem}\label{2.2}
The map $X_g/G\to (X/W)^{\mu_d}\hookrightarrow X/W$ induces a group morphism
\[B(G)=\pi_1(X_g/G,\hhat{x})\to \pi_1(X/W,\bbar{x})\]
whose image lies inside $C_{\pi_1(X/W,\bbar{x})}([\ttilde{g}])$, where $[\ttilde{g}]$ is the regular lift of $g$ in $\pi_1(X/W,\bbar{x})$ constructed above.
\end{lem}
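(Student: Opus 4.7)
The lemma has two assertions. For the existence of the induced morphism, I would simply invoke functoriality of $\pi_1$: the continuous composition $X_g/G\to (X/W)^{\mu_d}\hookrightarrow X/W$ sends $\hhat{x}$ to $\bbar{x}$, and hence induces a group morphism $\iota_*:\pi_1(X_g/G,\hhat{x})\to \pi_1(X/W,\bbar{x})$. Nothing more needs to be said for this part.

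For the centralizer property, the plan is to build an explicit homotopy upstairs in $X$. Fix a loop $\alpha$ in $X_g/G$ based at $\hhat{x}$. Since $G$ acts freely on $X_g$, the covering $X_g\to X_g/G$ lets us lift $\alpha$ to a path $\ttilde{\alpha}:[0,1]\to X_g$ with $\ttilde{\alpha}(0)=x$ and $\ttilde{\alpha}(1)=h.x$ for some $h\in G$. Then define
\[H:[0,1]^2\to X,\qquad H(s,t)=e^{2i\pi t/d}\,\ttilde{\alpha}(s).\]
This is well-defined since $\ttilde{\alpha}(s)\in V_g$ (stable under scalars) and also in $X$ (stable under multiplication by nonzero scalars, because the hyperplanes of $\Aa$ are linear).

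I would then project $H$ to $\bbar{H}:[0,1]^2\to X/W$ and inspect the four edges. The bottom edge ($t=0$) is the image of $\alpha$ by definition. The top edge ($t=1$) is $s\mapsto \bbar{\zeta_d\ttilde{\alpha}(s)}$, which coincides with the image of $\alpha$ in $X/W$ because $\ttilde{\alpha}(s)\in V_g$ gives $g.\ttilde{\alpha}(s)=\zeta_d\ttilde{\alpha}(s)$ and $g\in W$. The left edge ($s=0$) is the defining path of $[\ttilde{g}]$. The right edge ($s=1$) is $t\mapsto \bbar{e^{2i\pi t/d} h.x}=\bbar{h.(e^{2i\pi t/d}x)}=\bbar{e^{2i\pi t/d}x}$, using that $h\in W$ is linear, so it also represents $[\ttilde{g}]$. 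All four corners project to $\bbar{x}$, so $\bbar{H}$ is a homotopy in $X/W$ between the concatenations $\iota_*(\alpha)*[\ttilde{g}]$ and $[\ttilde{g}]*\iota_*(\alpha)$, proving $\iota_*(\alpha)\in C_{\pi_1(X/W,\bbar{x})}([\ttilde{g}])$.

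The only delicate point is the edge-by-edge bookkeeping, which mixes two distinct uses of the $W$-action: the action of $g$ on $\ttilde{\alpha}(s)$ to handle the top edge, and the action of $h$ on $e^{2i\pi t/d}x$ to handle the right edge. Once this is arranged, the argument reduces to the standard ``square yields commutation'' principle in fundamental groups, so I do not expect any further substantial obstacle.
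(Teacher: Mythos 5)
Your proof is correct and follows essentially the same route as the paper: both lift the loop to a path $\ttilde{\alpha}$ in $X_g$ ending at $h.x$ and use the interpolating map $(s,t)\mapsto e^{2i\pi t/d}\ttilde{\alpha}(s)$ (the paper presents its diagonal $t\mapsto e^{2i\pi t/d}\ttilde{\gamma}(t)$ as a common homotopy target for the two concatenated lifts, you present it as a square). The only cosmetic difference is that the paper proves the commutation already in $\pi_1(X_g/G,\hhat{x})$ before pushing forward, whereas you verify it directly in $\pi_1(X/W,\bbar{x})$; both suffice for the stated claim.
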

\begin{proof}
Let $x\in X_g$ be our basepoint. Let us recall that, since we have a covering map $X_g\twoheadrightarrow X_g/G$, an loop $\gamma$ from $\bbar{x}$ to $\bbar{x}$ in $X_g/G$ induces a unique  well-defined loop $\ttilde{\gamma}$ in $X_g$, starting from $x$, and ending at some $w.x$, where $w\in W$. Furthermore the homotopy class $[\ttilde{\gamma}]$ depends only on $[\gamma]\in \pi_1(X_g/G,\bbar{x})$. The path $w.\ttilde{\gamma}$ is then the unique lift of $\gamma$ starting at $w.x$. 

We know that $g$ acts on $V_g$ by multiplication by $\zeta_d$, the path $\ttilde{g}$ defined above is a path inside $X_g$, and it induces an element $[\ttilde{g}]$ in $\pi_1(X_g/G,\hhat{x})$ (which is by construction a regular lift of $g$). Let now $[\gamma]$ be another element of $\pi_1(X_g/G,\hhat{x})$,  the loop $\gamma$ lifts to a unique path $\ttilde{\gamma}$ in $X_g$ from $x$ to some $w.x$ (with $w\in G$). The product $[\gamma][\ttilde{g}]$ is defined using the lift 
\[\ttilde{\gamma}*(w.\ttilde{g}):t\mapsto \begin{cases}  \ttilde{\gamma}(2t)&\text{if }t\leqslant 1/2\\ w.\ttilde{g}(2t-1)=e^{\frac{2i\pi (2t-1)}{d}}w.x&\text{if }t\geqslant 1/2\\\end{cases}\]
And the product $[\ttilde{g}][\gamma]$ is defined using the lift
\[\ttilde{g}*(g.\ttilde{\gamma}):t\mapsto \begin{cases} \ttilde{g}(2t)=e^{\frac{4i\pi}{d}}.x&\text{if }t\leqslant 1/2\\ g.\ttilde{\gamma}(2t-1)=\zeta_d.\ttilde{\gamma}(2t-1)&\text{if }t\geqslant \frac{1}{2}\end{cases}\]
Both of these paths are homotopic to 
\[f:t\mapsto e^\frac{2i\pi t}{d} \ttilde{\gamma}(t)\]
therefore $[\gamma][\ttilde{g}]=[\ttilde{g}][\gamma]$ in $\pi_1(X_g/G,\hhat{x})$.
\end{proof}

Composing the morphism $B(G)\to C_{\pi_1(X/W,\bbar{x})}([\ttilde{g}])$ with an isomorphism $\pi_1(X/W,\bbar{x})\to B(W)$ coming from a path from $x$ to $x_0$ in $X$ yields a morphism $B(G)\to C_{B(W)}([\ttilde{g}])$, where $[\ttilde{g}]$ is a regular lift of $g$ in $B(W)$. This is the morphism we need in order to properly state property $(C)$.

\spa\begin{prop}\label{propc} Let $W$ be a complex reflection group for which properties $(A)$ and $(B)$ hold. Let also $\rho$ be a $d$-th root of $z_P$, let $g=\pi(\rho)$ be its image in $W$, and let $G:=C_W(g)$. Then the morphism $B(G)\to C_{B(W)}([\ttilde{g}])$ defined above is an isomorphism. Since $[\ttilde{g}]$ and $\rho$ are conjugate in $B(W)$ by property $(B)$, we deduce an isomorphism of groups over $G$ between $B(G)$ and $C_{B(W)}(\rho)$.
\end{prop}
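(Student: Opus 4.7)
The plan is to factor the morphism constructed in Lemma \ref{2.2} as
\[B(G) = \pi_1(X_g/G, \hhat{x}) \xrightarrow{\;p_*\;} \pi_1\bigl((X/W)^{\mu_d}, \bbar{x}\bigr) \xrightarrow{\;\iota_*\;} \pi_1(X/W, \bbar{x}),\]
where $p$ is the homeomorphism described above the lemma and $\iota$ is the inclusion of the $\mu_d$-fixed locus. Since $p$ is a homeomorphism, $p_*$ is automatically an isomorphism, and the entire problem will reduce to analyzing $\iota_*$.

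The core task will be to show that $\iota_*$ is injective with image exactly $C_{\pi_1(X/W,\bbar{x})}([\ttilde{g}])$. The natural setting for this is an equivariant $\pi_1$-theorem: whenever a finite cyclic group acts by homeomorphisms on an aspherical space, inclusion of the fixed locus induces an injection on $\pi_1$ whose image is the centralizer of the rotation loop at a fixed basepoint. In our situation $X/W$ is aspherical by the $K(\pi,1)$ theorem for complex reflection groups, the $\mu_d$-action is scalar multiplication, and by construction the rotation loop at $\bbar{x}$ is precisely the class $[\ttilde{g}]$. I expect this equivariant step to be the main obstacle, and would handle it either by invoking the general treatment of centralizers of regular elements in Broué (Chapter II of \cite{broue}), or directly by a covering-space argument on the universal cover of $X/W$: a loop commuting with $[\ttilde{g}]$ lifts to a $\mu_d$-equivariant path, which then descends into $(X/W)^{\mu_d}$; conversely, any nullhomotopy in $X/W$ of a loop living in the fixed locus can be averaged over $\mu_d$ into a nullhomotopy inside $(X/W)^{\mu_d}$.

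Once these pieces are combined to give the isomorphism $B(G) \simeq C_{B(W)}([\ttilde{g}])$, the final clause of the proposition is formal: by property $(B)$ some $b \in B(W)$ satisfies $\rho = b^{-1}[\ttilde{g}]b$, and its projection $\bbar{b}$ necessarily lies in $G = C_W(g)$, since $[\ttilde{g}]$ and $\rho$ share the same image $g$ in $W$. Conjugation by $b$ therefore transports the isomorphism $B(G) \simeq C_{B(W)}([\ttilde{g}])$ to an isomorphism $B(G) \simeq C_{B(W)}(\rho)$, and the compatibility with the projection to $G$ is preserved since $\bbar{b} \in G$ acts on $G$ by an inner automorphism.
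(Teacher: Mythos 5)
There is a genuine gap, and it sits exactly where you yourself locate the difficulty: the claim that $\iota_*$ is injective with image equal to $C_{\pi_1(X/W,\bbar{x})}([\ttilde{g}])$. You present this as an instance of a general ``equivariant $\pi_1$-theorem'' for finite cyclic actions on aspherical spaces, but no such theorem exists: asphericity of $X/W$ gives no control whatsoever over the fixed locus of the $\mu_d$-action, and neither of your fallback arguments can be made to work. A loop whose homotopy class merely \emph{commutes} with $[\ttilde{g}]$ lifts to a path that is $\mu_d$-equivariant only up to homotopy, and there is no canonical way to rectify it so that it lands in the fixed locus; and ``averaging a nullhomotopy over $\mu_d$'' is meaningless in $X/W$, which has no convex structure --- even after lifting to $V$, the average of points of $X$ can land on a reflecting hyperplane. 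What Lemma \ref{2.2} actually provides is only the easy containment: the image of $B(G)$ lies \emph{inside} the centralizer. Upgrading that containment to an equality, together with injectivity, is precisely the hard content of property $(C)$: for well-generated groups it is Bessis's Theorem 12.4, whose proof goes through the dual braid monoid and the Lyashko--Looijenga covering, not soft equivariant topology.

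Accordingly, the paper does not (and could not) prove Proposition \ref{propc} in one stroke. The proposition is the precise \emph{statement} of property $(C)$, and the rest of the paper constitutes its proof, case by case over the Shephard--Todd classification: well-generated groups by \cite{beskpi1}, the groups $G(de,e,n)$ via the index-$e$ embedding into $G(de,1,n)$ and the winding-number exact sequence, centralizers of regular elements via Lemma \ref{key}, isodiscriminantal pairs by transport of structure, and $G_{12}$, $G_{13}$ by explicit computation of divided categories. Your final paragraph --- transporting the isomorphism from $[\ttilde{g}]$ to $\rho$ by a conjugating element $b$ whose image $\bbar{b}$ lies in $G$ --- is correct, but it is the only formal part of the proposition; the main claim is not established by your argument.
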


This proposition is what we will call point $(C)$ from now on, we will prove it for all badly-generated reflection groups in the subsequent sections.

\subsection{Garside theory}
As we said, a key ingredient in the proof of \cite[Theorem 12.4]{beskpi1} is the dual braid monoid, which is a Garside monoid for the braid group of a well-generated complex reflection group. Our approach will also use Garside structures and tools from Garside theory. We give a quick introduction here, in order to settle notation and to state the main results we are going to use. Our main reference is \cite{ddgkm}.
\begin{definition}(\cite[Definition I.2.1]{ddgkm})\newline 
Let $M$ be a monoid. Assume that
\begin{enumerate}[\quad -]
\item $M$ is \nit{homogeneous}, that is, there is some monoid morphism $\ell:M\to \N$ such that $M$ is generated by elements of positive length.
\item $M$ is left and right cancellative
\item the posets $(M,\prec)$ and $(M,\succ)$ of left and right divisibility are both lattices (that is every pair of elements admits left and riqht lcms and gcds)
\end{enumerate}
On such a monoid, a \nit{Garside structure} is given by an element $\Delta$, called a \nit{Garside element}, such that the sets of left and right divisors of $\Delta$ are equal, finite, and generate $M$. This set of divisors will be noted $\Ss$, and its elements will be called \nit{simples} of the Garside structure.
\end{definition}
This definition is far from minimal, but it is sufficient for the study of most braid groups. One can consult \cite{ddgkm} for a more general setup. 

We recall three important properties of Garside monoids:
\begin{enumerate}[\quad -]
\item A Garside monoid $(M,\Delta)$ embeds in its envelopping group $\Gg(M)$ (that can be accurately described as a group of fractions of $M$). We say that $\Gg(M)$, endowed with $M$ and $\Delta$, is a \nit{Garside group}.
\item Conjugation by $\Delta$ in $\Gg(M)$ restricts to an automorphism $\phi$ of $M$, defined by $\Delta\phi(x)=x\Delta$ for $x\in M$. We call this automorphism the \nit{Garside automorphism} of the Garside monoid $(M,\Delta)$.
\item If $(M,\Delta)$ is a Garside monoid, then $(M,\Delta^k)$ is also a Garside monoid for $k\in \Z_{>0}$.
\end{enumerate}

The feature of Garside structures on braid groups that we are going to use is that $z_P$ is equal to some power of $\Delta$. That way a root of $z_P$ becomes a \nit{periodic element} in a Garside structure, in the sense of \cite[Definition VIII.3.2]{ddgkm}. An element $\gamma$ in a Garside group is $(p,q)$-periodic if
\[\gamma^p=\Delta^q\]
In general we will say that an element is periodic if it is $(p,q)$-periodic for some couple $(p,q)$. It is obvious that a $(p,q)$-periodic element is also $(np,nq)$-periodic for every integer $n$. There is some kind of converse statement, which will be useful for reducing computations:
\begin{prop}\label{334}(\cite[Proposition VIII.3.34]{ddgkm})\newline
Let $B$ be a Garside group, and let $\gamma\in B$ be a $(p,q)$-periodic element. We denote by $p\wedge q$ the gcd of $p$ and $q$.
\begin{enumerate}[\quad (a)]
\item Up to conjugacy, we can assume that $\gamma^{p'}=\Delta^{q'}$, where $p'=\frac{p}{p\wedge q}$ and $q'=\frac{q}{p\wedge q}$ and for all $(u,v)\in \Z_{\geqslant 0}\times \Z_{\leqslant 0}$ such that $p'u+q'v=1$, we have $\gamma^{v}\Delta^u\in \Ss$.
\item Furthermore, the value of $g=\gamma^v\Delta^u$ does not depend on the choice of the pair $(u,v)$ and we have $\gamma^{p'v}\Delta^{p'u}=\Delta$, $\gamma^{q'v}\Delta^{q'v}=\gamma$.
\item In particular, if $\Delta$ and $\gamma$ commute, we have $g^{p'}=\Delta$ and $g^{q'}=\gamma$.
\end{enumerate}  
\end{prop}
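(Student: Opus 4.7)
My plan addresses the parts in order, with the key external input being the classical Garside-theoretic theorem that roots are unique up to conjugacy. Set $e := p \wedge q$ so that $p = ep'$, $q = eq'$ with $\gcd(p',q') = 1$. For the initial reduction in (a), I rewrite $\gamma^p = \Delta^q$ as $(\gamma^{p'})^e = (\Delta^{q'})^e$; uniqueness of roots up to conjugacy in a Garside group then yields some $x \in B$ with $x^{-1}\gamma^{p'}x = \Delta^{q'}$, so that $(x^{-1}\gamma x)^{p'} = \Delta^{q'}$. Replacing $\gamma$ by its conjugate $x^{-1}\gamma x$, which remains $(p,q)$-periodic, I may assume $\gamma^{p'} = \Delta^{q'}$ from the outset. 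Bézout then provides integers $(u,v)$ with $p'u + q'v = 1$, and shifting along $(u,v) \mapsto (u + q'k, v - p'k)$ enforces $u \geq 0$, $v \leq 0$.

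Once this setup is in place, the elementary identities of (b) reduce to collecting exponents using $\gamma^{p'} = \Delta^{q'}$, which is crucially a power of $\Delta$ and thus commutes past everything in sight:
\[\gamma^{p'v}\Delta^{p'u} = \Delta^{q'v}\Delta^{p'u} = \Delta^{p'u+q'v} = \Delta,\qquad \gamma^{q'v}\Delta^{q'u} = \gamma^{q'v}\gamma^{p'u} = \gamma.\]
The independence of $g := \gamma^v\Delta^u$ from the chosen Bézout pair follows from exactly the same substitution: if $(u_1,v_1)$ and $(u_2,v_2)$ are both admissible, they differ by $(q'k, -p'k)$, and $\gamma^{-p'k}\Delta^{q'k}=1$ absorbs the discrepancy.

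The main obstacle is showing that $g$ is a simple, i.e.\ $g \in \Ss$. Since the simples of $(M,\Delta)$ are exactly the left divisors of $\Delta$, I would attack this in two moves: first show $g \in M$ (positivity), then exhibit $g \prec \Delta$. Positivity is the delicate point, because $\gamma^v$ with $v \leq 0$ is a priori only a fraction; one must leverage the identity $\gamma^{q'v}\Delta^{q'u} = \gamma$ together with $\gamma^p = \Delta^q \in M$ to force $\gamma$ itself into $M$, and then locate $g$ by analysing its Garside normal form. For the divisibility, the natural candidate is to show that $p'$ copies of $g$, threaded through the Garside automorphism $\phi$, collapse to $\Delta$: the relation $\gamma^{p'} = \Delta^{q'}$ implies $\gamma$ commutes with the central power $\Delta^{q'}$, which provides precisely the combinatorial room needed to slide the $\gamma^v$ factors past the $\Delta^u$ factors while correctly bookkeeping $\phi$. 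This Garside-theoretic manipulation is where the bulk of the technical effort resides; the rest of the proposition is essentially formal manipulation of exponents.

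Part (c) is then an immediate corollary of (a) and (b). When $\Delta$ and $\gamma$ commute, the power $(\gamma^v\Delta^u)^{p'}$ expands without reordering to $\gamma^{vp'}\Delta^{up'}$, which equals $\Delta$ by the first identity of (b); analogously $g^{q'} = \gamma^{vq'}\Delta^{uq'} = \gamma$ from the second identity. So in the commutative case the positivity/simplicity step becomes nearly automatic as well, since $g^{p'} = \Delta$ directly exhibits $g$ as a left divisor of $\Delta$ — it is only in the general setting of (a) that the Garside-theoretic work is genuinely needed.
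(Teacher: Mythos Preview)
The paper does not give its own proof of this proposition: it is quoted verbatim with a citation to \cite[Proposition~3.34]{ddgkm} and used as a black box. There is therefore no in-paper argument to compare your attempt against.

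That said, your outline is essentially the standard one and is correct in its architecture. The reduction at the start of (a) via uniqueness of $e$-th roots up to conjugacy in a Garside group is exactly the right external input, and your manipulations for (b) and (c) are valid (you have also silently corrected the typo $\Delta^{q'v}$ for $\Delta^{q'u}$ in the statement). One phrasing to tighten: ``a power of $\Delta$ and thus commutes past everything in sight'' is not literally true, since $\Delta$ need not be central; what you actually use, and what suffices, is that $\gamma^{p'}=\Delta^{q'}$ is simultaneously a power of $\gamma$ and of $\Delta$, hence commutes with both. The one substantive gap you yourself flag --- establishing $g\in\Ss$ --- is indeed where the Garside-theoretic work lies, and your sketch (positivity, then left-divisibility of $\Delta$ via a $\phi$-twisted product) points in the right direction but would need to be carried out in full to constitute a proof.
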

The last tool we need to introduce is the \nit{divided category} (see \cite[Definition XIV.1.2]{ddgkm}), which will allow us to explicitly compute presentations of centralizers of regular elements when needed. Let $(M,\Delta)$ be a Garside monoid with set of simples $\Ss$. If $m,n$ are two integers, we set
\[D_m^n(\Delta):=\left\{ (a_1,\ldots,a_m)\in \Ss^m~|~ \prod_{i=1}^m a_i=\Delta~\text{and~} (a_1,\ldots,a_m)^{\phi^n}=(a_1,\ldots,a_m)\right\}\]
where $\phi$ acts on tuples by 
\[(a_1,\ldots,a_m)^\phi:=(a_{2},\ldots,a_m,a_1^\phi)\]
For instance, we have that $D_2^0(\Delta)$ is in bijection with the set $\Ss$, or that $D_2^1(\Delta)$ is in bijection with elements $s$ such that $s^2=\Delta$. 

For a couple of integers $(p,q)$, we define a category $\Cc_p^q$ by using the sets $D_p^q(\Delta), D_{2p}^{2q}(\Delta)$ and $D_{3p}^{3q}(\Delta)$, considered respectively as a set of objects, generating morphisms, and relations:
\begin{enumerate}[\quad -]
\item An element $(a_{1},\ldots,a_{2p})\in D_{2p}^{2q}(\Delta)$ seen as a morphism has source and target as follows
\[(a_1,\ldots,a_{2p}): (a_1a_2,\ldots,a_{2p-1}a_{2p})\to(a_2a_3,\ldots,a_{2(p-1)}a_{2p-1},a_{2p}a_1^\phi),\]
one can easily check that both the source and the target are elements of $D_p^q(\Delta)$.
\item An element $(a_{1},\ldots,a_{3p})\in D_{3p}^{3q}(\Delta)$ induces the following relation
\[(a_1,a_2a_3,\ldots,a_{3p-2},a_{3p-1}a_{3p})(a_2,a_3a_4,\ldots,a_{3p-1},a_{3p}a_1^\phi)=(a_1a_2,a_3,\ldots,a_{3p-2}a_{3p-1},a_{3p})\]
between three elements of $D_{2p}^{2q}(\Delta)$.
\end{enumerate}

\spa\begin{theo}(\cite[Proposition XIV.1.8]{ddgkm})\label{divi}
\begin{enumerate}[\quad (a)]
\item The map $(a_1,\ldots,a_{2p})\mapsto a_1$ induces a functor from $\Cc_p^q$ to $M$ (where $M$ is seen as a category with one object). It will be called the \nit{collapse functor} (following \cite[Definition B.23]{beskpi1}).
\item The (undirected) connected components of $\Cc_p^q$ (seen as an undirected graph) are in one to one correspondance with conjugacy classes of $(p,q)$-periodic elements.
\item Consider the enveloping groupoid $\Gg_p^q$ of $\Cc_p^q$, and $x$ an object in this groupoid. The collapse functor induces a functor $\Gg_p^q\to \Gg(M)$, which sends the group $\Gg_p^q(x,x)$ of morphisms from $x$ to itself in $\Gg_p^q$ to the centralizer of some $(p,q)$-periodic element of $\Gg(M)$.
\end{enumerate}
\end{theo}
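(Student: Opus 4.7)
Since this result is cited from \cite{ddgkm}, my plan follows the general approach of the theory of divided categories in Garside monoids. I address the three parts in order. For (a), the argument is a direct verification. Viewing $M$ as a one-object category whose morphisms are its elements, a generating morphism $(a_1, \ldots, a_{2p}) \in D_{2p}^{2q}(\Delta)$ has well-defined source and target in $D_p^q(\Delta)$ because the $\phi^{2q}$-invariance of the tuple, combined with the pairing used to form source and target, induces $\phi^q$-invariance of both. Functoriality then reduces to checking that each relation of $\Cc_p^q$ is sent to an identity in $M$: a relation arising from $(a_1, \ldots, a_{3p}) \in D_{3p}^{3q}(\Delta)$ involves three morphisms whose collapses are respectively $a_1$, $a_2$, and $a_1 a_2$, so the relation becomes the tautological equality $a_1 \cdot a_2 = a_1 a_2$ in $M$.

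For (b), I would build a two-way correspondence between objects of $\Cc_p^q$ and $(p,q)$-periodic elements of $\Gg(M)$. Associated to an object $(s_1, \ldots, s_p) \in D_p^q(\Delta)$ is a $(p,q)$-periodic element $\gamma$ obtained by combining the simples with the Garside data in a manner compatible with $\phi^q$-invariance; this is the interpretation that justifies the indexing in $D_p^q$. Each generating morphism $(a_1, \ldots, a_{2p})$ then realizes an explicit conjugation: the collapse $a_1$ conjugates the periodic element attached to the source to the one attached to the target. This produces a well-defined map from the set of connected components of $\Cc_p^q$ to the set of conjugacy classes of $(p,q)$-periodic elements. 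For the inverse direction, given two conjugate periodic elements $\gamma$ and $c\gamma c^{-1}$, I would decompose $c$ into simples using the Garside normal form and interpret each factor as a generating morphism of $\Cc_p^q$, producing a zigzag between the corresponding objects.

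For (c), the induced functor $\Gg_p^q \to \Gg(M)$ exists by the universal property of the enveloping groupoid. The image of $\Gg_p^q(x,x)$ lies in $C_{\Gg(M)}(\gamma)$ because the source and target of a loop at $x$ collapse to the same periodic element $\gamma$, so by the argument in (b) each loop maps to an element that commutes with $\gamma$. The reverse inclusion — that every element of the centralizer is realized by a loop at $x$ — is the main obstacle. One takes $c \in C_{\Gg(M)}(\gamma)$, decomposes it through simples, and must verify that the resulting path in $\Cc_p^q$ closes up to a loop at $x$ rather than ending at a different object in the same connected component. The crucial ingredient here is the interplay between $\phi$, the cyclic symmetry of the tuples in $D_{2p}^{2q}(\Delta)$, and the identity $c\gamma = \gamma c$, which forces the endpoint of the decomposition to coincide with its starting object.
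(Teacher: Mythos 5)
The paper does not prove this statement at all: Theorem \ref{divi} is imported verbatim from \cite[Proposition XIV.1.8]{ddgkm} (see also \cite[Theorem B.22]{beskpi1}), so there is no in-paper argument to compare yours against. Judged on its own, your sketch gets the architecture right --- part (a) is a genuine and complete verification (the $\phi^{2q}$-invariance of a $2p$-tuple does descend to $\phi^q$-invariance of the paired $p$-tuple, and every relation collapses to $a_1\cdot a_2=a_1a_2$) --- but in (b) and (c) the load-bearing steps are asserted rather than carried out, and these are exactly where the content of the ddgkm proof lives.

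Concretely: first, you never say \emph{which} $(p,q)$-periodic element is attached to an object $(a_1,\dots,a_p)\in D_p^q(\Delta)$. It is not $a_1$, nor the product; it is recovered from the simple $g=\gamma^v\Delta^u$ of Proposition \ref{334} via the B\'ezout data $pu+qv=1$ (and one must first reduce to $\gcd(p,q)=1$ for that proposition to apply). Without this recipe, neither your claim that a generating morphism ``realizes an explicit conjugation by its collapse'' nor the closing-up argument in (c) can be checked. Second, the reverse direction of (b) and the surjectivity onto the centralizer in (c) do not follow from ``decompose $c$ into simples and interpret each factor as a generating morphism'': a given simple $s$ yields a morphism out of a fixed object only under divisibility constraints (essentially $s$ must left-divide the first entry), and $c$ lives in $\Gg(M)$, so it carries a negative power of $\Delta$ that must be absorbed by $\Delta$-translation/twisting. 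Handling this requires the actual machinery of \cite[Chapter XIV]{ddgkm} (the divided category is itself Garside, and conjugacy of periodic elements is controlled by cycling-type moves in it); naming ``the interplay between $\phi$ and the cyclic symmetry'' as the crucial ingredient is a placeholder for that argument, not a substitute for it. Since the paper treats this as a black box, the efficient fix is simply to cite it as the paper does; if you do want a self-contained proof, these two points are what you must supply.
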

In order to prove points $(B)$ and $(C)$ of our main theorem, it is enough to compute directly a presentation of the category $\Cc_p^q$. We can then see that this category is connected, which will prove point $(B)$, and use its presentation to deduce a presentation of the centralizer of some $(p,q)$-periodic element in order to prove point $(C)$. Here we see a first use of Proposition \ref{334}: replacing $(p,q)$ by $\left(\frac{p}{p\wedge q},\frac{q}{p\wedge q}\right)$ makes the set $D_{kp}^{kq}(\Delta)$ a lot easier to compute in practice.

\section{First reductions for the proof}
We begin our proof by noticing that some implications in the theorem are obvious:
\begin{enumerate}[\quad -]
\item If $d$ is a regular number for $W$, then there exists $d$-th roots of $z_P$. Indeed we have seen that for instance, a regular lift $\ttilde{g}$ of a regular element $g\in W$ is such a root.
\item If the $d$-th roots of $z_P$ are conjugate, and if $d$ is regular, then all $d$-th roots of $z_P$ are conjugate to every regular lift of a $\zeta_d$-regular element. So they are mapped to $\zeta_d$-regular elements of $W$.
\end{enumerate}
Thanks to this, in practice we will only need to show that, if $d$-th roots of $z_P$ exist, then $d$ is regular, that $d$-th roots of $z_P$ are conjugate, and that the morphism of Lemma \ref{2.2} is an isomorphism.

There are 2 cases (namely those of $G_{12}$ and $G_{13}$) that we will solve through explicit computations. In order to reduce these computations to a minimum, we give here some results on regular elements and regular numbers. These results hold in a general setting, but we will mostly use them on groups of rank 2.

It is known that the existence of regular elements for a complex reflection group is conditioned by the degrees and codegrees of $W$ (see for instance \cite[Theorem 11.28]{lehrertaylor}). When $W$ is irreducible,  we define $A(d)$ (resp. $B(d)$) as the degrees (resp. codegrees) of $W$ that are divisible by $d$. The number $d$ is regular for $W$ if and only if $|A(d)|=|B(d)|$. When it is the case, the centralizer of a $\zeta_d$-regular element is a reflection group, whose degrees (resp. codegrees) are the elements of $A(d)$ (resp. of $B(d)$).

This characterization in terms of divisibility has a first obvious consequence. If $d$ is a regular number for $W$, then we have $|A(d)|=|B(d)|$. Moreover, if we define $d'$ as the gcd of the union $A(d)\cup B(d)$, then we get that $d|d'$ (by definition of a gcd), and that $d'$ is regular for $W$, with $A(d')=A(d)$ and $B(d')=B(d)$ (and $d'$ is the greatest integer with this property). The number $d'$ will be called the \nit{fundamental regular number} associated to $d$. From this we get a bijection between conjugacy classes of centralizers of regular elements in $W$ and fundamental regular numbers of $W$. 

Consider $d_1<\ldots<d_k$ the fundamental regular numbers of $W$. The regular numbers of $W$ are exactly the integers that divide one of the $d_i$. The fundamental regular number $d'$ associated to $d$ is the smallest $d_i$ such that $d|d_i$. We get
\[\forall d\in \N, i\in \intv{2,k},~(A(d)=A(d_i)\text{~and~}B(d)=B(d_i))\Leftrightarrow (d|d_i\text{~and~}d\nmid d_{i-1})\]
We write $R_i$ for the set of integers $d$ satisfying those conditions (we set for $R_1$ the divisors of $|Z(W)|$, the fundamental regular number associated to $1$).

Consider now $d\in R_i$ some regular number, we could look for a smallest number $d'' \in R_i$ dividing $d$. This exists by definition of $R_i$, but it depends on $d$. Consider for instance the dihedral group $G(12,12,2)$ of order 24. Its degrees and codegrees are $2,12$ and $0,10$. So $3$ and $4$ are coprime regular numbers, both with fundamental regular number $12$, which is not the fundamental regular number for $1$. However, it is easy to check by computer that the $R_i$ are lattices when $W$ is an exceptional group.

\spa\begin{lem}\label{fund}
Let $g\in W$ be a $\zeta_d$-regular element of $W$, $d'$ the fundamental regular number associated to $d$, and let $d''$ be its smallest associated regular number. There exists some $\zeta_{d'}$-regular element $g'$ such that $g'^{d'/d}=g$, and with 
\[C_W(g')=C_W(g)=C_W(g^{\frac{d}{d''}})\]
\end{lem}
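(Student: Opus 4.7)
The strategy is to deduce everything from two standard consequences of Springer's theory of regular elements: (i) for each regular integer $d$, the $\zeta_d$-regular elements of $W$ form a single $W$-conjugacy class; and (ii) for any $\zeta$-regular element $g_0 \in W$, the centralizer $C_W(g_0)$ equals the setwise stabilizer in $W$ of the regular eigenspace $V_\zeta(g_0) := \ker(g_0 - \zeta)$.

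To construct $g'$, I would pick any $\zeta_{d'}$-regular element $h \in W$ together with a regular eigenvector $x \in X$ satisfying $h(x) = \zeta_{d'} x$. Since $d \mid d'$, one computes $h^{d'/d}(x) = \zeta_{d'}^{d'/d} x = \zeta_d x$, so $h^{d'/d}$ is $\zeta_d$-regular. By (i) there exists $w \in W$ with $w h^{d'/d} w^{-1} = g$, and then $g' := w h w^{-1}$ is $\zeta_{d'}$-regular and satisfies $(g')^{d'/d} = g$.

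The next step is to identify the three regular eigenspaces involved. From $(g')^{d'/d} = g$ and $\zeta_{d'}^{d'/d} = \zeta_d$, any $\zeta_{d'}$-eigenvector of $g'$ is a $\zeta_d$-eigenvector of $g$, so $V_{\zeta_{d'}}(g') \subseteq V_{\zeta_d}(g)$. Since $d'$ is the fundamental regular number associated to $d$ we have $A(d') = A(d)$, so both spaces have the same dimension $|A(d)|$ and therefore coincide. Applying the same elementary computation with $d/d''$ in place of $d'/d$, and using that $d'' \in R_i$ forces $A(d'') = A(d)$, shows that $g^{d/d''}$ is $\zeta_{d''}$-regular and that $V_{\zeta_{d''}}(g^{d/d''}) = V_{\zeta_d}(g)$.

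Finally, feeding this common eigenspace into (ii) would give
\[
C_W(g') \;=\; \mathrm{Stab}_W\bigl(V_{\zeta_d}(g)\bigr) \;=\; C_W(g) \;=\; C_W(g^{d/d''}),
\]
which is exactly the required identity. The only real subtlety is the fact (ii) itself: the inclusion $C_W(g_0) \subseteq \mathrm{Stab}_W(V_\zeta(g_0))$ is immediate, and for the converse one observes that if $w$ preserves $V_\zeta(g_0)$ and $x \in V_\zeta(g_0) \cap X$ is regular, then the computations $g_0(w.x) = \zeta w.x$ and $(w g_0 w^{-1})(w.x) = \zeta w.x$ show that $g_0$ and $w g_0 w^{-1}$ agree on the regular point $w.x$; freeness of the $W$-action on $X$ then forces $w \in C_W(g_0)$. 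Once (ii) is in place, the remainder of the argument is just dimension counting combined with Springer's conjugacy of regular elements.
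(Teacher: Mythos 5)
Your proof is correct. The construction of $g'$ is exactly the paper's: take any $\zeta_{d'}$-regular element, observe that its $(d'/d)$-th power is $\zeta_d$-regular, and conjugate it onto $g$ using Springer's conjugacy. Where you diverge is in the centralizer equalities. The paper notes the inclusion $C_W(g')\subseteq C_W(g)$ (trivial, since $g$ is a power of $g'$) and then concludes equality because both centralizers are reflection groups whose degrees are $A(d')=A(d)$, hence finite groups of the same order; the equality $C_W(g)=C_W(g^{d/d''})$ is handled identically via $A(d'')=A(d)$. You instead identify each centralizer with the setwise stabilizer of its regular eigenspace (your fact (ii), correctly proved via freeness of the $W$-action on $X$) and show by the dimension count $\dim V_{\zeta_{d'}}(g')=|A(d')|=|A(d)|=\dim V_{\zeta_d}(g)$ that all three elements share the same regular eigenspace. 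Both routes rest on the same input, namely $A(d')=A(d)=A(d'')$ from the definitions of the fundamental regular number and of $R_i$; the paper's is shorter given the degrees statement already quoted in Section 3, while yours needs the extra stabilizer characterization but makes the geometric reason for the coincidence explicit and yields the slightly stronger conclusion that $g$, $g'$ and $g^{d/d''}$ have a common regular eigenspace, which is the fact actually used later when comparing $X_g$, $V_g$ and the associated braid groups.
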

\begin{proof}
Let $g$ be a $\zeta_d$-regular element, and $g_d$ be a $\zeta_{d'}$-regular element. We know that $g_d^{d'/d}$ is $\zeta_d$-regular, so there is some $a\in W$ such that $ag_d^{d'/d}a^{-1}=g$. The element $g':=ag_d a^{-1}$ is $\zeta_{d'}$-regular and we have $g'^{d'/d}=g$ by definition. Moreover, the equality $C_W(g')=C_W(g)$ is obvious since we have $C_W(g')\subset C_W(g)$, and they are both reflection groups with the same degrees. The second equality of centralizers is obvious for the same reasons.
\end{proof}

\begin{rem}(Fundamental roots)\label{fr}\newline We assume that properties $(A)$ and $(B)$ hold for $W$. Let $\rho$ be a $d$-th root of $z_P$, and $g=\pi(\rho)$. We have that $g$ is $\zeta_d$-regular, so there is some $\zeta_{d'}$-regular element $g'$ that is a root of $g$ by Lemma \ref{fund}, where $d'$ is the fundamental regular number associated to $d$. A regular lift of $g'$ in $B(W)$ is by construction a $\tfrac{d'}{d}$-th root of some regular lift $\ttilde{g}$ of $g$. Since $\ttilde{g}$ and $\rho$ are conjugate, we get that there is some $\rho'$ such that $\rho'^{d'}=z_P$ and $\rho'^{d'/d}=\rho$. We call $\rho'$ a \nit{fundamental root} of $z_P$, associated to $\rho$.
\end{rem}

\begin{rem} In \cite[Definition 3]{shvartsman} is introduced the notion of \nit{regular Springer set} to denote the set of regular numbers that are maximal with respect to divisibility. Obviously the elements of the regular Springer set are fundamental degrees, although the converse is false in general. The proof of Lemma \ref{fund}  also proves that any $\zeta_d$-regular element is a power of a $\zeta_{d'}$-regular element where $d'$ is in the regular Springer set. The same goes for Remark \ref{fr}: $d$-th root of $z_p$ in the braid group is a power of a $d'$-th root of $z_p$, where $d'$ is in the regular Springer set. The notion of regular Springer set will be used again in Section \ref{sec8}.
\end{rem}

\begin{rem}\label{r2}(Property $(C)$ for groups of rank two)\newline Let $W$ be of rank 2 and such that properties $(A)$ and $(B)$ hold. We denote by $d_1,d_2$ the degrees of $W$ and by $d^*$ its non zero codegree. If $g\in W$ is a $\zeta_d$-regular element, then we have either
\begin{enumerate}[\quad -]
\item $d$ divides $d_1,d_2,d^*$, in which case $C_W(g)=W$, $V_g=V$, $X_g=X$ and property $(C)$ is obvious since $X/W$ and $X_g/G$ are naturally homeomorphic.
\item $d$ divides $d_1$ and doesn't divide $d_2$ and $d^*$. Then $d_1$ doesn't divide $d^*$, and it is a regular number: the fundamental regular number associated to $d$. If $\rho$ is a $d$-th root of $z_P$, and $\rho'$ a fundamental root associated to $\rho$, then we have to show that $C_{B(W)}(\rho)=C_{B(W)}(\rho')=\langle \rho'\rangle$ to prove property $(C)$. Thanks again to Lemma \ref{fund}, we can always assume that $d$ is equal to its own smallest associated number.
\item $d$ divides $d_2$ and doesn't divide $d_1$ and $d^*$. This corresponds to the above case when swapping $d_1$ and $d_2$ (as we did not assume that $d_1<d_2$ here).
\end{enumerate}
\end{rem}

\section{Infinite series}\label{serieinf}
This section consists in showing that properties $(A),(B)$ and $(C)$ hold for $W=G(de,e,n)$ a member of the infinite series. We need not consider the case where $d=1$ or $e=1$, as they give rise to well-generated reflection groups. 

As is often the case, $G(de,e,n)$ will be tackled by tacking advantage of the natural embedding $G(de,e,n)\hookrightarrow G(de,1,n)$ making $G(de,e,n)$ a normal subgroup of $G(de,1,n)$ (which is well-generated).

The first two points of our theorem for the infinite series are already considered in \cite[Discussions after Theorem 4.15]{cll}. We borrow some of their notation and reproduce a synthetic proof for the sake of clarification. 

We recall from \cite[Theorem 3.6]{bmr} that the group $B(de,1,n)\simeq B(2,1,n)$ is generated by $b_1,\ldots,b_n$ with the relations 
\begin{enumerate}[$(B_1)$]
\item $b_1b_2b_1b_2=b_2b_1b_2b_1$
\item $b_ib_{i+1}b_i=b_{i+1}b_ib_{i+1}$ for $i\in \intv{2,n-1}$
\item $b_ib_j=b_jb_i$ for $|i-j|>1$
\end{enumerate}
And that $B(de,e,n)$, seen as a subgroup of $B(de,1,n)$, is generated by
\[\begin{array}{ccc} z:=b_1^e & t_i:=b_1^{-i}b_2b_1^i (i \in \Z) & s_j=b_j (j\geqslant 3)\end{array}\]
We consider the morphism\footnote{$\mathrm{wd}$ stands for ``winding number'', a notation we borrow from \cite[Definition 2.2]{cll}} $\wid:B(de,1,n)\to \Z$ defined by 
\[\wid(b_1)=1\text{~and~}\wid(b_i)=0\text{ for }i\in \intv{2,n}.\]
We have that $b$ is in $B(de,e,n)$ if and only if $\wid(b)\equiv 0[e]$. We will denote $\wid_e$ the morphism $b\mapsto \wid(b)[e]$, so we have $\ker \wid_e=B(de,e,n)$.

We first notice that, as the hyperplane arrangements for $G(de,e,n)$ and $G(de,1,n)$ in $\C^n$ are the same (since $d\geqslant 2$), the element $z_P$ is the same for $B(de,e,n)$ and $B(de,1,n)$. So there is no conflict of notation between $z_{P(de,e,n)}$ and $z_{P(de,1,n)}$.

In $G(de,1,n)$, the element $\epsilon:=\pi(b_1\ldots b_n)$ is equal to
\[\epsilon=\begin{pmatrix}0 & 0 & \cdots & 0 & \zeta_{den}^n\\ 1 &0 & \cdots&0 & 0\\0 & 1 & \cdots & 0 & 0\\ \vdots & \vdots & \ddots & \vdots & \vdots \\ 0 & 0 & \cdots & 1 & 0\end{pmatrix} \]
It admits $(\zeta_{den}^{n-1},\ldots,\zeta_{den})$ as a $\zeta_{den}$ eigenvector: it is a $\zeta_{den}$-regular element in $G(de,1,n)$, that is a generalized Coxeter element in the sense of \cite[Definition 7.1]{beskpi1}. As such, it induces a Garside element $\epsi=b_1\cdots b_n$ defining a dual structure of $B(de,1,n)$ (see \cite[Definition 8.7 and Lemma 8.8]{beskpi1}). We have
\[\varepsilon^{n}=z_{B(de,1,n)}\text{~~and~~}\varepsilon^{den}=z_P\]
On the other hand, we know that the degrees and codegrees of $B(de,1,n)$ are
\[\begin{array}{cccc}  de& 2de&\ldots & den\\ 0 &de &\ldots &de(n-1) \end{array}\]
So a number $k$ is regular for $G(de,1,n)$ if and only if it divides $den$. Applying point $(B)$ to the well-generated group $G(de,1,n)$, we get

\begin{lem}\label{5.1}
Every periodic element in $B(de,1,n)$ is conjugate to some power of $\varepsilon$.
\end{lem}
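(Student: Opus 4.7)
The plan is to apply Proposition \ref{334}(a) to a periodic $\gamma\in B(de,1,n)$ to reduce to the case $\gamma^p=\varepsilon^q$ with $\gcd(p,q)=1$, and then exploit point $(A)$ for the well-generated group $G(de,1,n)$ (Bessis's \cite[Theorem 12.4]{beskpi1}) to force $p=1$; this will yield $\gamma\sim \varepsilon^q$, a power of $\varepsilon$ as required.

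The key technical step will be a Bezout-type modification of $\gamma$ that promotes it to a genuine root of $z_P$. I would write $h:=\gcd(q,den)$, $q=hb$, $den=ha$ with $\gcd(a,b)=1$. Using $\varepsilon^{den}=z_P$ (which is central in $B(W)$ by the theorem of \cite{dmm}), one gets $\gamma^{pa}=z_P^b$, and the coprimality conditions $\gcd(p,q)=1$ and $\gcd(a,b)=1$ together give $\gcd(pa,b)=1$. Picking $\alpha,\beta\in \Z$ with $pa\alpha+b\beta=1$ and setting $\rho:=\gamma^\beta z_P^\alpha$, the centrality of $z_P$ in $B(W)$ yields $\rho^{pa}=z_P$.

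At this point $\rho$ is a $(pa)$-th root of $z_P$, so point $(A)$ for the well-generated group $G(de,1,n)$ forces $pa$ to be regular. Since the degrees are $de,2de,\ldots,den$, the regular numbers are exactly the divisors of $den=ha$, so $pa\mid ha$, i.e.\ $p\mid h$. Combined with $h\mid q$ and $\gcd(p,q)=1$, this gives $p=1$, whence $\gamma=\varepsilon^q$ up to conjugation.

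I expect the main obstacle to be precisely the construction of $\rho$: a naive attempt to realize $\gamma$ itself as a root of $z_P$ fails whenever $\gcd(q,den)<q$, and it is the central twist by $z_P^\alpha$ that converts a general periodic element $\gamma$ into a form where Bessis's property $(A)$ for $G(de,1,n)$ can be invoked.
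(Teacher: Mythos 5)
Your proof is correct and follows essentially the same route as the paper: reduce to coprime exponents via Proposition \ref{334}, manufacture a genuine root of $z_P$ from $\gamma$, and invoke Bessis's point $(A)$ for the well-generated group $G(de,1,n)$ to force $p=1$. Your explicit Bezout construction $\rho=\gamma^{\beta}z_P^{\alpha}$ is exactly the content of Proposition \ref{334}(b)--(c) applied with the central Garside element $z_P=\varepsilon^{den}$, which is how the paper phrases that step.
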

\begin{proof} Let $\gamma\in B(de,1,n)$ be such that $\gamma^p=\varepsilon^q$. Up to conjugacy, we can assume in particular that $p$ and $q$ are coprime (thanks to Proposition \ref{334} applied to the dual Garside structure for $B(de,1,n)$). Let $k$ be the gcd of $q$ and $den$, we have
\[\gamma^{\frac{pden}{k}}=\varepsilon^{\frac{qden}{k}}=z_P^{\frac{q}{k}}\]
We can then consider $z_P=\varepsilon^{den}$ as a Garside element for another Garside structure on $B(de,1,n)$. Using again Proposition \ref{334}, $\gamma$ is conjugate to some $\ttilde{\gamma}$ such that
\[\ttilde{\gamma}^{\frac{pden}{ka}}=z_P^{\frac{q}{ka}}\]
where $a$ is the gcd of $\frac{q}{k}$ and $\frac{pden}{k}$ (so $ka=q\wedge (pden)$). Another use of Proposition \ref{334}, gives us a $\frac{pden}{ka}$-th root of $z_P$ (since $z_P$ is central). By \cite[Theorem 12.4]{beskpi1} we obtain $\frac{pden}{ka}|den\Rightarrow p|ka$.

 But since $ka$ divides $q$, we get that $p$ divides $q$, and since they are coprime, we have $p=1$. Up to conjugacy, we can assume that a periodic element $\gamma$ is such that $\gamma=\varepsilon^q$, which is what we wanted.
\end{proof}

This generalizes property $(B)$ for $B(de,1,n)$ to all periodic elements instead of just roots of $z_P$, which are a certain kind of periodic elements.

Since $\mathrm{wd}(\varepsilon)=1$, we have $\lambda:=\varepsilon^e\in B(de,e,n)$, this element is a $nd$-th root of $z_P$. The following proposition has been proven in \cite[Theorem 4.14]{cll}, using the tools they introduce there. We provide a more direct proof here:

\begin{prop}
Every element in $B(de,e,n)$ that admits a central power is conjugate in $B(de,e,n)$ to some power of $\lambda$. 
\end{prop}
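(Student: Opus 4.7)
The plan is to lift the conjugation provided by Lemma \ref{5.1} (which lives in the ambient well-generated group $B(de,1,n)$) to a conjugation inside the finite-index subgroup $B(de,e,n)$, using the winding number homomorphism $\wid$ as the obstruction to membership.

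First I would reduce the hypothesis to the Garside notion of periodicity in $B(de,1,n)$. Let $\gamma\in B(de,e,n)$ admit a central power $\gamma^k\in Z(B(de,e,n))$. By the theorem on centers recalled in Section 2, $Z(B(de,e,n))$ is cyclic with a suitable power equal to $z_P=\varepsilon^{den}$. So some power of $\gamma$ is a power of $\varepsilon$, and $\gamma$ is periodic in $B(de,1,n)$ in the Garside sense. Applying Lemma \ref{5.1} produces $g\in B(de,1,n)$ and $q\in\Z$ such that $g\gamma g^{-1}=\varepsilon^q$.

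Next I would determine $q$ modulo $e$ using $\wid$. Since $\wid:B(de,1,n)\to\Z$ factors through the abelianization, it is constant on conjugacy classes, so $q=\wid(\varepsilon^q)=\wid(\gamma)$. The hypothesis $\gamma\in B(de,e,n)=\ker\wid_e$ then forces $q\equiv 0\pmod{e}$. Writing $q=em$ we obtain $g\gamma g^{-1}=\varepsilon^{em}=\lambda^m$, which is already the desired target element.

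Finally I would correct $g$ so that it sits in $B(de,e,n)$, without disturbing the conjugation. Pick any integer $\ell$ with $\ell\equiv -\wid(g)\pmod{e}$, and set $g'=\varepsilon^{\ell}g$. Since $\varepsilon^{\ell}$ is a power of $\varepsilon$, it commutes with $\lambda^m=\varepsilon^{em}$, hence $g'\gamma g'^{-1}=\varepsilon^{\ell}\lambda^m\varepsilon^{-\ell}=\lambda^m$. Moreover $\wid(g')=\ell+\wid(g)\equiv 0\pmod{e}$, so $g'\in B(de,e,n)$, and $\gamma$ is conjugate to $\lambda^m$ inside $B(de,e,n)$, as claimed.

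The real content sits entirely in Lemma \ref{5.1}; once periodicity has been promoted to a conjugation inside $B(de,1,n)$, the only potential obstacle is ensuring the conjugator can be chosen inside the kernel of $\wid_e$, and this is immediate because $\varepsilon$ already centralises every power of $\lambda$ and realises every residue class of $\wid$ mod $e$. Thus I expect no serious difficulty beyond organising the winding-number bookkeeping cleanly.
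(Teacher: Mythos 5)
Your proposal is correct and follows essentially the same route as the paper: promote the central power to Garside periodicity in $B(de,1,n)$, invoke Lemma \ref{5.1} to conjugate onto a power of $\varepsilon$, read off divisibility of the exponent by $e$ via the winding number, and then adjust the conjugator by a power of $\varepsilon$ (which centralises $\lambda$) to land in $\ker\wid_e$. The only cosmetic difference is that you deduce $q\equiv 0\pmod e$ from invariance of $\wid$ under conjugation, where the paper uses normality of $B(de,e,n)$ in $B(de,1,n)$ --- these are the same observation.
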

\begin{proof}
Thanks to \cite[Theorem 1.4]{dmm}, since $B(de,e,n)$ is a finite index subgroup of $B(de,1,n)$, we have $Z(B(de,e,n))\subset Z(B(de,1,n))$. So if $\rho\in B(de,e,n)$ admits a central power in $B(de,e,n)$, then it also does in $B(de,1,n)$. Since the center of $B(de,1,n)$ is generated by $\varepsilon^n$, we deduce that $\rho$ is periodic in $B(de,1,n)$, the last proposition then implies that $\rho$ is conjugate in $B(de,1,n)$ to some $\varepsilon^r$. 

 Since $B(de,e,n)$ is normal, we must have $\varepsilon^r\in B(de,e,n)$, that is $\wid(\varepsilon^r)=r\equiv 0[e]$. This implies that $r=pe$ for some integer $p$ and that there is some $g\in B(de,1,n)$ such that $\rho^g=\varepsilon^{pe}=\lambda^p$ so $\rho$ is conjugate to some power of $\lambda$ in $B(de,1,n)$. We need to show that the conjugating element $g$ need is in $B(de,e,n)$. As $\varepsilon$ is an element of winding number $1$ that centralizes $\lambda$, so assuming $i=\wid(g)$, we get
\[\rho^{g\varepsilon^{-i}}=(\lambda^{\varepsilon^{-i}})^p=\lambda^p\]
and $g\varepsilon^{-i}\in B(de,e,n)$ is an element of $B(de,e,n)$ that conjugates $\rho$ to $\lambda^p$.
\end{proof}

\begin{cor}
The properties $(A)$, $(B)$ and $(C)$ are true for $B(de,e,n)$ when $d,e\geqslant 2$.
\end{cor}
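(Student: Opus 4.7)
The plan is to deduce all three properties from the previous proposition together with the fact that $(A),(B),(C)$ already hold for the well-generated ambient group $W^+:=G(de,1,n)$, via a clean comparison of centralizers inside $B(de,e,n)$ and $B(de,1,n)$.

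For $(A)$ and $(B)$: if $\rho\in B(de,e,n)$ satisfies $\rho^k=z_P$, then $\rho$ admits a central power, so by the previous proposition $\rho$ is conjugate in $B(de,e,n)$ to some $\lambda^p$. Since $\lambda=\varepsilon^e$ has infinite order and $\lambda^{dn}=z_P$, the identity $\lambda^{pk}=z_P=\lambda^{dn}$ forces $pk=dn$; thus $k$-th roots of $z_P$ exist if and only if $k\mid dn$, and when they do they form a single conjugacy class, all conjugate to $\lambda^{dn/k}$. Inspecting the degrees $de,2de,\ldots,(n-1)de,dn$ and codegrees $0,de,\ldots,(n-1)de$ of $G(de,e,n)$ (the hypothesis $d,e\geqslant 2$ is precisely what keeps us out of the $G(m,m,n)$ case, where the last codegree is shifted) one sees that the regular numbers of $W$ are exactly the divisors of $dn$, which gives $(A)$. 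The single conjugacy class gives one half of $(B)$, and the reductions at the start of Section 3 handle the claim that these roots map to $\zeta_k$-regular elements.

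For $(C)$: let $g\in W$ be $\zeta_k$-regular, $G:=C_W(g)$, $\widetilde g$ a regular lift, and set $G^+:=C_{W^+}(g)$. Since $d\geqslant 2$, the hyperplane arrangements of $W$ and $W^+$ coincide, so the arrangements of $G$ and $G^+$ in $V_g$ also coincide; in particular $X_g$ is the same for both, and $\widetilde g$ serves simultaneously as a regular lift inside $B(W)$ and inside $B(W^+)$. Naturality of the construction of Lemma \ref{2.2} produces a commutative square
\[\xymatrix{B(G)\ar[r]\ar@{^{(}->}[d] & C_{B(W)}(\widetilde g)\ar@{^{(}->}[d]\\ B(G^+)\ar[r] & C_{B(W^+)}(\widetilde g)}\]
whose bottom arrow is an isomorphism by property $(C)$ for the well-generated group $W^+$. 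The key observation is that both vertical inclusions pick out ``the preimage of $G$ inside $G^+$'': on the left, $B(G)\subset B(G^+)$ is by construction the preimage of $G\subset G^+$ under $B(G^+)\twoheadrightarrow G^+$; on the right, every element of $C_{B(W^+)}(\widetilde g)$ already projects to $G^+\subset W^+$ (its image in $W^+$ commutes with $g$), so $C_{B(W)}(\widetilde g)=C_{B(W^+)}(\widetilde g)\cap B(W)$ is precisely the preimage of $W\cap G^+=G$. Since the bottom isomorphism intertwines the two projections to $G^+$, it restricts to the desired isomorphism on top.

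The only non-bookkeeping point is this last compatibility: one must verify that the morphism of Lemma \ref{2.2} for $W^+$ intertwines $B(G^+)\twoheadrightarrow G^+$ with the projection of $C_{B(W^+)}(\widetilde g)$ to $G^+$ coming from $B(W^+)\twoheadrightarrow W^+$. This is built into the topological construction (a loop in $X_g/G^+$ lifts in $X_g$ to a path ending at some $g'\cdot x$ with $g'\in G^+$, and $g'$ is equally the image of the loop read in $X/W^+$), but it is the step that must be written out explicitly in order to translate the comparison $G=W\cap G^+$ at the level of reflection groups into the corresponding statement $C_{B(W)}(\widetilde g)=B(G)$ at the level of braid groups.
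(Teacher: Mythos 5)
Your proof is correct. For properties $(A)$ and $(B)$ it follows essentially the same route as the paper: reduce to powers of $\lambda$ via the preceding proposition, use torsion-freeness to get $k\mid dn$, and note that the divisors of $dn$ are exactly the regular numbers of $G(de,e,n)$ (a point the paper leaves implicit and you rightly make explicit). For property $(C)$ your argument is a genuine variant of the paper's. The paper works with the explicit winding-number morphism $\wid_e$: it identifies $C_B(\lambda^p)$ and $G$ as kernels of the restrictions of $\wid_e$ and $\bbar{\wid_e}$, checks that these restrictions remain surjective onto $\Z/e\Z$ (using that $\varepsilon$ centralizes $\lambda$ and has winding number $1$), and concludes by comparing two short exact sequences with quotient $\Z/e\Z$. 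You instead characterize both $B(G)\subset B(G^{+})$ and $C_{B(W)}(\ttilde{g})\subset C_{B(W^{+})}(\ttilde{g})$ as the preimages of $G\subset G^{+}$ under the respective projections, and restrict the isomorphism furnished by property $(C)$ for the well-generated group $G(de,1,n)$. Both arguments hinge on the same compatibility, namely that the isomorphism $B(G^{+})\simeq C_{B(W^{+})}(\ttilde{g})$ is a morphism over $G^{+}$ (which you correctly flag as the step to write out, and which holds by the topological construction of Lemma \ref{2.2}); your preimage formulation spares you the surjectivity check and any mention of $\wid_e$, at the cost of having to justify the covering-space identifications, which do hold here because the arrangements of $G(de,e,n)$ and $G(de,1,n)$ coincide when $d\geqslant 2$.
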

\begin{proof}
Let $\rho\in B(de,e,n)$ be such that $\rho^k=z_P$. By the last proposition $\rho$ is conjugate to some $\lambda^r$ and we have $\lambda^{rk}=z_P=\lambda^{dn}$. Since $B(de,e,n)$ has no torsion, we have $rk=dn$ and $k$ divides $dn$, which gives point $(A)$.

For point $(B)$, any two $k$-th roots of $z_P$ are conjugate to the same power of $\lambda$, and so they are conjugate. As powers of $\lambda$ are mapped in $G(de,e,n)$ to some conjugate of a regular element, it is also the case of any $k$-th root of $z_P$.

For the last point, we only have to show the result for powers of $\lambda$. Set $W=G(de,e,n)$, $\hhat{W}=G(de,1,n)$, $B$ and $\hhat{B}$ their respective braid groups, $G=C_W(\pi(\lambda)^p)$ and $\hhat{G}=C_{\hhat{W}}(\pi(\lambda)^p)$. We have
\[C_B(\lambda^p)=C_{\hhat{B}}(\lambda^p)\cap B=C_{\hhat{B}}(\lambda^p)\cap \ker (\wid_e)=\ker(\wid_{e|C_{\hhat{B}}(\lambda^p)})\]
\[G=\hhat{G}\cap W=\hhat{G}\cap \ker (\bbar{\wid_e})=\ker(\bbar{\wid_{e}}_{|\hhat{G}})\]
We already know that the morphism $B(\hhat{G})\to C_{\hhat{B}}(\lambda^p)$ is an isomorphism. Furthermore, since $\varepsilon$ commutes with $\lambda$ and has winding number $1$, the morphisms $\bbar{\wid_{e}}_{|C_{\hhat{W}}(\pi(\lambda)^p)}$ and $\wid_{e|C_{\hhat{B}}(\lambda^p)}$ are surjective, and we have short exact sequences
\[\xymatrixcolsep{3pc}\xymatrix{1 \ar[r] & B(G)\ar[ddd] \ar[r] & B(\hhat{G})\ar[ddd]^-{\simeq} \ar[rd] \ar[rr]^-{\wid_e} &  & \Z/e\Z\ar@{=}[ddd] \ar[r]& 1\\ & & & \hhat{G}\ar[d] \ar[ru]_-{\bbar{\wid_e}} & & \\ & & & \hhat{W} \ar[rd]^-{\bbar{\wid_e}} & & \\ 1 \ar[r] & C_B(\lambda^p) \ar[r] & C_{\hhat{B}}(\lambda^p)\ar[ru] \ar[rr]_-{\wid_e} & & \Z/e\Z\ar[r]  & 1}\]
We deduce from this that the morphism $B(G)\to C_B(\lambda^p)$ is an isomorphism. This proves property $(C)$ for $B(de,e,n)$.
\end{proof}

\section{Centralizers of regular elements}
We now turn to the case of reflection groups that appear as centralizers of regular elements. More precisely we show that if properties $(A),(B),(C)$ hold for a given group $W$, then they also hold for centralizers of regular elements in $W$ (this idea originates from \cite[Remark 12.6]{beskpi1}).

Let $W$ be a reflection group for which properties $(A),(B)$ and $(C)$ are true, and let $\delta$ be a $r$-root of $z_P$ in $B(W)$. We set $G=C_W(\pi(\delta))$, we know by hypothesis that $B(G)$ identifies to $C_{B(W)}(\delta)$, identification which sends $z_P$ on the full twist defined for $W$. Let $d$ be another integer, and set $k=d\wedge r$, $d=d'k$ and $r=r'k$.

\spa\begin{lem}\label{key}
Let $\rho \in B(G)$ be a $d$-th root of $z_P$. The value of $\rho^v\delta^u$ does not depend on the choice of $(u,v)$ such that $d'u+r'v=1$, we denote it by $q(\rho)$. It is an element of $B(W)$ such that
\begin{enumerate}[\quad (a)]
\item $q(\rho)^{d\vee r}=z_P$, where $d\vee r$ is the lcm of $d$ and $r$.
\item $q(\rho)^{d'}=\delta$
\item $q(\rho)^{r'}=\rho$
\end{enumerate}
\end{lem}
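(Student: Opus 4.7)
The plan is to first extract the crucial commutation identity $\rho^{d'}=\delta^{r'}$, after which everything else is formal manipulation using B\'ezout.

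The starting observation is that, since $\rho\in B(G)$ and $B(G)\simeq C_{B(W)}(\delta)$ by property $(C)$ for $W$, the elements $\rho$ and $\delta$ commute in $B(W)$. Moreover both $z_P$'s are identified, so in $B(W)$ we have
\[
\rho^d=z_P=\delta^r,\qquad d=d'k,\ r=r'k.
\]
Raising to the $k$-th root (informally) suggests $\rho^{d'}=\delta^{r'}$, and this is in fact what I would prove first: from $(\rho^{d'})^k=(\delta^{r'})^k$ together with $[\rho^{d'},\delta^{r'}]=1$, the element $\rho^{d'}(\delta^{r'})^{-1}$ is of order dividing $k$; since $B(W)$ is torsion-free (it embeds in a Garside group, or alternatively $X/W$ is a $K(\pi,1)$), this forces $\rho^{d'}=\delta^{r'}$.

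With this identity in hand, well-definedness of $q(\rho)=\rho^{v}\delta^{u}$ is routine. If $(u_1,v_1)$ and $(u_2,v_2)$ both satisfy $d'u+r'v=1$, then $d'(u_1-u_2)=r'(v_2-v_1)$, and since $\gcd(d',r')=1$ there is $m\in\Z$ with $u_1-u_2=r'm$ and $v_2-v_1=d'm$. Using the commutation of $\rho$ and $\delta$,
\[
\rho^{v_1}\delta^{u_1}=\rho^{v_2}\delta^{u_2}\cdot\rho^{-d'm}\delta^{r'm}=\rho^{v_2}\delta^{u_2}\cdot\bigl(\rho^{d'}(\delta^{r'})^{-1}\bigr)^{-m}=\rho^{v_2}\delta^{u_2}.
\]

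The three remaining properties are then direct computations, all exploiting that $\rho$ and $\delta$ commute and that $d'u+r'v=1$:
\[
q(\rho)^{d\vee r}=q(\rho)^{d'r'k}=(\rho^d)^{vr'}(\delta^r)^{ud'}=z_P^{vr'+ud'}=z_P,
\]
which is $(a)$; for $(b)$,
\[
q(\rho)^{d'}=\rho^{vd'}\delta^{ud'}=(\delta^{r'})^{v}\delta^{ud'}=\delta^{r'v+d'u}=\delta;
\]
and symmetrically $q(\rho)^{r'}=\rho^{vr'}(\rho^{d'})^{u}=\rho^{vr'+d'u}=\rho$, giving $(c)$.

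The only real obstacle is establishing $\rho^{d'}=\delta^{r'}$; once that is secured via torsion-freeness, the rest is a B\'ezout exercise. There is no deeper content, since $(\rho,\delta)$ generate an abelian subgroup of $B(W)$ in which we are simply extracting the $\gcd$-th root of two commuting elements that share a common power.
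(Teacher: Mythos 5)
Your proof is correct and follows essentially the same route as the paper's: commutativity of $\rho$ and $\delta$ (coming from the identification $B(G)\simeq C_{B(W)}(\delta)$), torsion-freeness of $B(W)$, and B\'ezout manipulations. The only difference is organizational: you isolate the identity $\rho^{d'}=\delta^{r'}$ up front and deduce everything from it, whereas the paper proves $(a)$ first and extracts the same cancellation $(\rho^{d'}\delta^{-r'})^{q(d\vee r)}=1$ inside the well-definedness argument before handling $(b)$ and $(c)$ with specific B\'ezout pairs.
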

\begin{proof}
This result is analogous to Proposition \ref{334} but here, instead of considering the connection between a periodic element and a Garside element, we consider the connection between two periodic elements. This Lemma could be proved using Proposition \ref{334} in a divided category but here, it is easy to give a more direct proof:

Let $(u,v)$ be such that $d'u+r'v=1$, we have
\[(\rho^v\delta^u)^{d\vee r}=\rho^{dr'v}\delta^{d'ru}=z_P^{r'v+d'u}=z_P\]
Let now $(u',v')$ be another pair such that $d'u'+r'v'=1$. It is known that $v'=v+d'q$, $u'=u-r'q$ for some integer $q$. We obtain
\[(\rho^{v'}\delta^{u'})=\rho^{v}\rho^{d'q}\delta^{u}\delta^{-r'q}=(\rho^{v}\delta^u)(\rho^{d'}\delta^{-r'})^{q}\]
and since $(\rho^{v'}\delta^{u'})^{d\vee r}=z_P$, we have $(\rho^{d'}\delta^{-r'})^{q(d\vee r)}=1$. Since $B(W)$ is without torsion, we have $\rho^{v'}\delta^{u'}=\rho^v\delta^u$ as we claimed.

Then, since $d'$ and $r$ are also coprime, we can consider $(a,b)$ such that $d'a+rb=1$, we then get
\[q(\rho)^{d'}=(\rho^{kb}\delta^a)^{d'}=\rho^{db}\delta^{ad'}=z_P^b\delta^{1-rb}=\delta\]
Likewise, by taking $x,y$ such that $dx+r'y=1$, we get $q(\rho)^{r'}=\rho$.
\end{proof}

This Lemma is the key ingredient of this section:
\begin{enumerate}[$(A)$]
\item If $\rho$ is a $d$-th root of $z_P$ in $B(G)$, then $q(\rho)$ is a $d\vee r$-th root of $z_P$ in $B(G)\subset B(W)$. So $d\vee r$ is regular for $W$ by point $(A)$ for $W$: it divides as many degrees as codegrees of $W$. But by definition of the lcm, the (co)degrees of $G$ divided by $r$ are exactly the (co)degrees of $W$ divided by $d$ and $r$ (i.e divided by $d\vee r$). So $d$ is regular for $G$.
\item If $\rho$ and $\rho'$ are two $d$-th roots of $z_P$ in $B(G)$, then $q(\rho)$ and $q(\rho')$ are conjugate by some $g\in B(W)$. But then
\begin{enumerate}[\quad -]
\item $g\delta g^{-1}=g(q(\rho)^{d'})g^{-1}=q(\rho')^{d'}=\delta$, so $g\in B(G)=C_{B(W)}(\delta)$
\item $g\rho g^{-1}=g(q(\rho)^{r'})g^{-1}=q(\rho')^{r'}=\rho'$
\end{enumerate}
and $\rho$ and $\rho'$ are conjugate by $g\in B(G)$. 
\item Let $\rho$ be a $d$-th root of $z_P$ in $B(G)$ and $w$ its image in $G$. Again thanks to Lemma \ref{key}, we have
\[C_{B(G)}(\rho)=B(G)\cap C_{B(W)}(\rho)=C_{B(W)}(\rho)\cap C_{B(W)}(\delta)=C_{B(W)}(q(\rho))\]
The same reasoning gives $C_{B(G)}(\ttilde{w})=C_{B(W)}(q(\ttilde{w}))$ and $G'=C_W(\pi(q(\ttilde{w})))$. The morphism $B(G')\to C_{B(G)}(\ttilde{w})$ is the same morphism as $B( C_W(\pi(q(\ttilde{w})))) \to C_{B(W)}(q(\ttilde{w}))$, which is known to be an isomorphism, this concludes the proof.
\end{enumerate}

It is known that $G_{31}$ and $G_{22}$ respectively admits regular embeddings into $G_{37}$ and $G_{30}$. Since both $G_{37}$ and $G_{30}$ are well-generated groups for which the theorem is already known to hold, we get that the theorem is also true for $G_{22}$ and $G_{31}$.

\section{Degrees, codegrees and isodiscriminantality}
No exceptional badly-generated group other than $G_{31}$ and $G_{22}$ admits a regular embedding inside a well-generated group. However, the preceding sections can still be used for more than these two groups. For instance, $G_7$ has the same degrees and codegrees as the group $G(12,2,2)$, which is a member of the infinite series. By Section 4, we know that our theorem holds for $G(12,2,2)$. Here we show that this is sufficient to show that it also holds for $G_7$.

\spa\begin{prop}
Let $W,W'$ be two irreducible complex reflection groups having the same degrees and codegrees. If $(A)$ (resp. $(B)$) holds for $W$, then it also holds for $W'$.
\end{prop}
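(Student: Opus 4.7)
The plan is to reduce everything to the known fact that two irreducible complex reflection groups with the same degrees and codegrees are \emph{isodiscriminantal}. More precisely, the Shephard--Chevalley isomorphisms $V/W \simeq \C^n \simeq V'/W'$ can be chosen so as to identify the two discriminant hypersurfaces, in a way that is equivariant with respect to the $\C^*$-actions coming from the gradings (which match because the degrees coincide). Consequently, one obtains a $\C^*$-equivariant homeomorphism between the discriminant complements $X(W)/W$ and $X(W')/W'$. This induces a group isomorphism $\Phi : B(W) \to B(W')$, and because $z_P$ is defined intrinsically as the homotopy class of an $S^1$-orbit through the basepoint, the $\C^*$-equivariance of $\Phi$ forces $\Phi(z_{P(W)}) = z_{P(W')}$. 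This is the deep input of the argument; the existence of such a $\Phi$ is the content of Bessis's isodiscriminantality results (and can also be verified by inspection of the short list of isodiscriminantal pairs).

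Once $\Phi$ is available, property $(A)$ transfers almost mechanically. On the one hand, the regularity criterion $|A(d)|=|B(d)|$ depends only on the multiset of degrees and codegrees, so $d$ is regular for $W$ if and only if it is regular for $W'$. On the other hand, $\Phi$ restricts to a bijection between the sets of $d$-th roots of $z_P$ in $B(W)$ and in $B(W')$. Combining, the equivalence stated in $(A)$ for $W$ translates into the same equivalence for $W'$.

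For property $(B)$, I would argue as follows. Suppose $(B)$ holds for $W$ and let $d$ be a regular number (necessarily regular for both groups). Since $\Phi$ sends $d$-th roots of $z_{P(W)}$ to $d$-th roots of $z_{P(W')}$ and preserves conjugacy, the roots in $B(W')$ form a single conjugacy class. For the ``regular projection'' half of $(B)$, I would not attempt to use $\Phi$ directly — which is silent on projection to the reflection group — but instead invoke the construction recalled in Section~2: since $d$ is regular for $W'$, any $\zeta_d$-regular element $g' \in W'$ admits a regular lift $\ttilde{g'} \in B(W')$ which is, by definition, a $d$-th root of $z_{P(W')}$ projecting to $g'$. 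Combined with the conjugacy just obtained, every $d$-th root of $z_{P(W')}$ is conjugate to $\ttilde{g'}$, hence projects to a $W'$-conjugate of $g'$, which is again $\zeta_d$-regular. The main obstacle really lies entirely in the first paragraph: granting isodiscriminantality, the rest is pure transport of structure.
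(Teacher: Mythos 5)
Your proposal is correct and follows essentially the same route as the paper: both rest on the (case-checked) fact that the pairs of irreducible groups with matching degrees and codegrees are isodiscriminantal, yielding an isomorphism $B(W)\to B(W')$ that preserves the full twist, after which $(A)$ and $(B)$ transfer formally — the regularity criterion depends only on the degrees and codegrees, and the ``projects to regular elements'' half of $(B)$ follows from the Section 3 reduction via regular lifts, exactly as you say. The only cosmetic difference is that the paper first matches $z_B$ (the single traversal of the $S^1$-orbit in $X/W$) and then deduces $\Phi(z_{P(W)})=z_{P(W')}$ from $|Z(W)|=|Z(W')|$ (the gcd of the common degrees), whereas you phrase the equivariance statement directly for $z_P$; these amount to the same thing.
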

\begin{proof}
The main tool for this proof is the classification of all the pairs of irreducible groups having the same degrees and codegrees. This classification is easy by direct inspection of the degrees and codegrees of each irreducible group, and summarized by the following lemma:
\begin{lem}
The only pairs of irreducible complex reflection groups having same degrees and codegrees are 
\[\begin{array}{cccc} G_5\leftrightarrow G(6,1,2) & G_{10}\leftrightarrow G(12,1,2) & G_{18}\leftrightarrow G(30,1,2) & G_7\leftrightarrow G(12,2,2)\\ G_{11}\leftrightarrow G(24,2,2) & G_{15}\leftrightarrow G(24,4,2) & G_{19}\leftrightarrow G(60,2,2) & G_{26}\leftrightarrow G(6,1,3) \end{array}\]
We could also include $G(2,2,3)\leftrightarrow G(1,1,4),~~ G(3,3,2)\leftrightarrow G(1,1,3)$ and $G(2,1,2)\leftrightarrow G(4,4,2)$, but these pairs are moreover isomorphic as complex reflection groups.
\end{lem}
Each of these pairs are pairs of isodiscriminantal groups. This is proven in general for Shephard groups in \cite[Theorem 2.25]{orliksoldiscri}. The remaining cases are groups of rank 2, which are studied in \cite[Section 2]{bannai}: for each irreducible group of rank 2 a system of basic invariants is given, associated with the corresponding discriminant. Also, since the cardinality of the center of a complex reflection group is the gcd of its degrees, two groups sharing the same degrees both have centers of the same cardinality.

The isomorphism $\varphi:B(W)\to B(W')$ induced by isodiscriminantality sends $z_{B(W)}$ to $z_{B(W')}$. As the centers of $W$ and $W'$ have the same size, it also sends $z_{P(W)}$ to $z_{P(W')}$. So the existence of $d-$th roots for $z_{P(W)}$ implies the existence of $d$-th roots of $z_{P(W')}$. As the groups $W$ and $W'$ have the same degrees and codegrees, we get that properties $(A)$ and $(B)$ for $W$ imply properties $(A)$ and $(B)$ for $W'$.\end{proof}

In order to prove property $(C)$, we are going to restrict to groups of rank 2 so that we can use Remark \ref{r2}. The only pair of groups having identical degrees and codegrees and of rank more than $2$ is $G_{26}\leftrightarrow G(6,1,3)$. Since both of these groups are well-generated, they already are known to satisfy the theorem.

Let $d$ be a regular number giving a group of rank $1$, $d'$ its associated fundamental regular number. If $\rho$ is a $d$-th root of $z_{P'}$ and $\rho'$ is an associated fundamental root, then we already know that $C_{B(W)}(\varphi^{-1}(\rho))=C_{B(W)}(\varphi^{-1}(\rho'))=\langle \varphi^{-1}(\rho')\rangle$, and the isomorphism $\varphi$ gives the desired result.

This settles the cases of $G_7,G_{11},G_{19}$ and $G_{15}$, since $G(12,2,2),G(24,2,2),G(60,2,2)$ and $G(24,4,2)$ belong to the infinite series.

\section{The two remaining exceptional groups}
By now, the only remaining groups are $G_{12}$ and $G_{13}$. We are going to study them case by case with similar Garside theoretic tools. We recall that $B_{12}$ and $B_{13}$ are Garside groups (see for instance \cite[Example 11 and 13]{thespicantin}). The relevant data are tabulated as follows:
\[\begin{array}{r|cc} & G_{12} & G_{13} \\\hline \text{Degrees} & 6~ 8 & 8~ 12 \\ \text{Codegrees} & 0~10 & 0~16 \\ \text{Presentation of $B$} & stus=tust=ustu & cabc=bcab, ~abcab=cabca\\ \Delta & stus & (abc)^3 \\ z_B & (stu)^4=\Delta^3 & (abc)^3=\Delta\\ z_P & (stu)^8=\Delta^6 & (abc)^{12}=\Delta^4\end{array}\]
The two presentations given for $B_{12}$ and $B_{13}$ also provide the presentations for the associated Garside monoid, that we denote $M$ and $N$ respectively.

The Garside automorphisms for $M$ and $N$ are respectively given by
\[\phi:\begin{cases} s\mapsto t \\ t\mapsto u \\ u\mapsto s\end{cases} \text{~~and~~}\psi=1_{N}\]

We study the $d$-th roots of $z_P=\Delta^q$, with $q=6$ or $4$ respectively. We are going to compute a presentation of $\Cc_{p'}^{q'}$, and use Theorem \ref{divi}.

We are also going to make use of Remark \ref{r2} to prove property $(C)$. We will only have to explicitly compute a presentation of $\Cc_{p'}^{q'}$ when $p'$ is its own smallest regular number. In the other cases, we will only need to check if $\Cc_{p'}^{q'}$ is connected, so we will only compute $D_{p'}^{q'}(\Delta)$ and $D_{2p'}^{2q'}(\Delta)$.

\subsection{Case of $G_{12}$}
Recall that $M$ is the Garside monoid defined by the presentation \newline $\langle s,t,u~|~stus=tust=ustu\rangle$. It admits $B_{12}$ as its group of fractions. The full twist of $B_{12}$ is $z_P=(stu)^8=\Delta^6$, which has length $24$ in $M$. A root $\rho$ of $z_P$ then has a length that divides $24$, so the ``candidate regular numbers'' are $\{1,2,3,4,6,8,12,24\}$. 

Besides, we know that the regular numbers for $B_{12}$ are $1,2,3,4,6,8$, the fundamental regular numbers are $2,6,8$ and we have $R_1=\{1,2\}$ $R_2=\{3,6\}$, $R_3=\{4,8\}$. 

\subsubsection{Proof of property $(A)$} Here we only have to show that there are no $d$-th roots of $z_P$ for $d\in \{12,24\}$. Since $12$ divides $24$, it is sufficient to show that there are no $12$-th roots of $z_P$. Since $z_P=\Delta^6$, applying Proposition \ref{334} we compute $D_1^2(\Delta)$. For $(x,y)\in \Ss^2$, we have
\[(x,y)=(x,y)^\phi=(y,x^\phi)\Leftrightarrow x=y=x^\phi\]
Therefore we have $D_1^2(\Delta)=\{x\in \Ss~|~ x^2=\Delta\}=\varnothing$. The category $\Cc_1^2$ is empty and there are no $12$-th roots of $z_P$ in $B_{12}$. This proves point $(A)$.

In order to prove points $(B)$ and $(C)$, we need to prove the connectedness (and non-emptiness) of $\Cc_p^6$ for $p=\{2,6,8\}$, and to compute a presentation for $p=\{1,3,4\}$.
\subsubsection{Proof of connectedness for non-smallest regular numbers} 
\begin{enumerate}[$\bullet$]
\item $p=2$, we compute $\Cc_1^3=M^{\phi^3}=M$, which is connected (as a monoid).
\item $p=6$, we compute $\Cc_1^1=M^{\phi}$, which is connected (it is also a monoid).
\item $p=8$, we compute $\Cc_4^3$. For $(x,y,z,t)\in \Ss^4$, we have
\[(x,y,z,t)\in D_4^3(\Delta)\Leftrightarrow xyzt=\Delta \text{~and~} (x,y,z,t)=(t,x^\phi,y^\phi,z^\phi)\]
The second condition gives $y=x^\phi,z=x^{\phi^2},t=x$, so we deduce that
\[D_4^3(\Delta)\approx\{x\in \Ss~|~ xx^\phi x^{\phi^2}x=\Delta\}=\{s,t,u\}.\]
The same reasoning gives us
\[D_8^6(\Delta)\approx\{(\alpha,\beta)\in \Ss^2~|~ (\alpha\beta)(\alpha\beta)^{\phi}(\alpha\beta)^{\phi^2}(\alpha\beta)=\Delta\}\]
with $(\alpha,\beta):\alpha\beta \to \beta \alpha^\phi$, in particular we have the following subgraph of $\Cc_4^3$: 
\[\xymatrix{ & s \ar@{<-}[ld]_{(t,1)} & \\
t  \ar@{<-}[rr]_{(u,1)} & &  u  \ar@{<-}[lu]_{(s,1)}}\]
which proves that $\Cc_4^3$ is connected.
\end{enumerate}

\subsubsection{Proof of connectedness and computation of the centralizer for the smallest regular numbers}
\begin{enumerate}[$\bullet$]
\item $p=1$, we compute $\Cc_1^6=M^{\phi^6}=M$, its enveloping group is $B_{12}$. We have that $z_P$ is a $1$-th root of $z_P$, and that $\Delta^3$ is an associated fundamental root. As both $z_P$ and $\Delta^3$ are central in $B_{12}$, we have $C_{B_{12}}(\Delta^3)=C_{B_{12}}(z_P)=B_{12}$ as expected.
\item $p=3$, we compute $\Cc_1^{2}=M^{\phi^2}$, which is generated by the simples $x\in \Ss$ such that $s^{\phi^2}=s$. The only such simple is $\Delta$, so we have $M^{\phi^2}=\langle \Delta \rangle$, and $\Delta$ is a $6$-th root of $z_P$.
\item $p=4$, we compute $\Cc_2^{3}$. We obtain
\begin{align*}
D_2^3(\Delta)&\approx\{x\in \Ss~|~ xx^{\phi^2}=\Delta\}=\{st,tu,us\}\\
D_4^6(\Delta)&\approx\{(\alpha,\beta)\in \Ss^2~|~ (\alpha\beta)(\alpha\beta)^{\phi^2}=\Delta\}\\
D_6^9(\Delta)&\approx\{(u,v,w)\in \Ss^3~|~ (uvw)(uvw)^{\phi^2}=\Delta\}
\end{align*}
With $(\alpha,\beta):\alpha\beta\to \beta\alpha^{\phi^2}$, and $(u,v,w)$ inducing the relation $(u,vw)(v,wu^{\phi^2})=(uv,w)$. We obtain that $\Cc_2^3$ is presented by the following graph
\[\xymatrixcolsep{6pc}\xymatrixrowsep{6pc}\xymatrix{ & st \ar@{<-}@/^/[ld]^{(tu,1)} \ar@{<-}@/^/[rd]^{(u,s)}& \\
tu \ar@{<-}@/^/[ur]^{(s,t)} \ar@{<-}@/^/[rr]^{(us,1)} & &  us \ar@{<-}@/^/[ll]^{(t,u)} \ar@{<-}@/^/[lu]^{(st,1)}}\]
with the relations 
\[(u,s)\circ (s,t)=(us,1); (s,t)\circ (t,u)=(st,1); (t,u)\circ (u,s)=(tu,1)\]
From this we deduce that the automorphism group of the object $st$ is cyclic and generated by $(s,t)\circ(t,u)\circ(u,s)$, which is sent by the collapse functor to $stu$, a $8$-th root of $z_P$. Since $8$ is the fundamental regular number associated to $4$, this concludes the proof of our theorem for $G_{12}$.

\end{enumerate}

\spa\subsection{Case of $G_{13}$}
Recall that $N$ is the Garside monoid defined by the presentation \newline $\langle a,b,c~|~bcab=cabc,~abcab=cabca\rangle$. It admits $B_{13}$ as its group of fractions. The full twist of $B_{13}$ is $z_P=(abc)^{12}=\Delta^4$, which has length $36$ in $N$. A root of $z_P$ then has a length that divides $36$, so the ``candidate regular numbers'' are $\{1,2,3,4,6,9,12,18,36\}$.

Besides we know that the regular numbers for $B_{13}$ are $1,2,3,4,6,12$, the fundamental regular numbers are $4,12$ , and we have $R_1=\{1,2,4\}$, $R_3=\{3,6,12\}$.

\subsubsection{Proof of property $(A)$} Here we only have to show that there are no $d$-th roots of $z_P$ for $d\in \{9,18,36\}$. Since $9$ divides $18$ and $36$, it is sufficient to show that there are no $9$-th roots of $z_P$. If $\rho$ is such a root, then $\rho^3$ is a $3$-th root of $z_P$. If we assume (for now) that point $(B)$ holds for $d=3$, then we get that $\pi(\rho)^3$ is a $3$-regular element. In particular we have $\pi(\rho)^3\neq 1$, thus $\pi(\rho)$ is of order $9$. This contradicts the fact that $9$ does not divide $|G_{13}|=96$.

In order to prove points $(B)$ and $(C)$, we need to prove the connectedness (and non-emptiness) of $\Cc_p^4$ for $p=\{2,4,6,12\}$ and to compute a presentation for $p=\{1,3\}$
\subsubsection{Proof of connectedness for non-smallest regular numbers} 
\begin{enumerate}[$\bullet$]
\item $p=2$, we compute $\Cc_1^2=N^{\psi^2}=N$, which is connected (as a monoid).
\item $p=4$, we compute $\Cc_1^1=N^{\psi}=N$, which is also connected.
\item $p=6$, we compute $\Cc_3^2$, for $(x,y,z)\in \Ss^3$. We have
\[(x,y,z)\in D_3^2(\Delta)\Leftrightarrow xyz=\Delta \text{~and~} (x,y,z)=(z,x,y)\]
The second condition gives $x=y=z$, so we deduce that
\[D_3^2(\Delta)\approx\{x\in \Ss~|~ x^3=\Delta\}=\{abc,bca,cab\}\]
The same reasoning gives us
\[D_6^4(\Delta)\approx\{(\alpha,\beta)\in \Ss~|~ (\alpha\beta)^3=\Delta\}\]
with $(\alpha,\beta):\alpha\beta\to \beta\alpha$, in particular we have the following subgraph of $\Cc_3^2$:
\[\xymatrix{cba \ar@{<-}[r]^-{(ab,c)} & abc & bca \ar@{<-}[l]_-{(a,bc)}}\]
which proves that $\Cc_3^2$ is connected.
\item $p=12$, we compute $\Cc_3^1$. The same reasoning as when $p=6$ gives $D_3^1(\Delta)=\{abc,bca,cab\}$ and $D_6^2(\Delta)=D_6^4(\Delta)$. So $\Cc_3^1$ is also connected.
\end{enumerate}

\subsubsection{Proof of connectedness and computation of the centralizer for the smallest regular numbers}
\begin{enumerate}[$\bullet$]
\item $p=1$, we compute $\Cc_1^4=N^{\psi^4}=N$. Its enveloping group is $B_{13}$. We have that $z_P$ is a $1$-root of $z_P$, and that $\Delta$ is an associated fundamental root. As both $z_P$ and $\Delta$ are central in $B_{13}$, we have $C_{B_{13}}(\Delta)=C_{B_{13}}(z_P)=B_{13}$ as expected.
\item $p=3$, we compute $\Cc_3^4$. We have
\begin{align*}
D_3^4(\Delta)&\approx\{x\in \Ss~|~x^3=\Delta\}=\{abc,bca,cab\}\\
D_6^8(\Delta)&\approx\{(\alpha,\beta)\in \Ss^2~|~ (\alpha\beta)^3=\Delta\}\\
D_9^{12}(\Delta)&\approx\{(u,v,w)\in \Ss^3~|~ (uvw)^3=\Delta\}\\
\end{align*}
With $(\alpha,\beta):\alpha\beta\to \beta\alpha$, and $(u,v,w)$ inducing the relation $(u,vw)(v,wu)=(uv,w)$. We obtain that $\Cc_2^1$ is presented by the following graph
\[\xymatrixcolsep{6pc}\xymatrixrowsep{6pc}\xymatrix{ & abc \ar@{<-}@/^/[ld]^{(bc,a)} \ar@{<-}@/^/[rd]^{(c,ab)}& \\
bca \ar@{<-}@/^/[ur]^{(a,bc)} \ar@{<-}@/^/[rr]^{(ca,b)} & &  cab \ar@{<-}@/^/[ll]^{(b,ca)} \ar@{<-}@/^/[lu]^{(ab,c)}}\]
with the relations
\[\begin{array}{lll}(a,bc)(b,ca)=(ab,c) & (b,ca)(c,ab)=(bc,a) & (c,ab)(a,bc)=(ca,b)\\
(ab,c)(c,ab)=(a,bc)(bc,a) & (bc,a)(a,bc)=(b,ca)(ca,b) & (ca,b)(b,ca)=(c,ab)(ab,c) \end{array}\]
From this we deduce that the automorphism group of the object $abc$ is cyclic and generated by $(ab,c)(c,ab)$, which is sent by the collapse functor to $abc$, a $12$-th root of $\Delta$. Since $12$ is the fundamental regular number associated to $3$, this concludes the proof.
\end{enumerate}

\section{A Kerékj\'art\'o type by-product}\label{sec8}
In \cite[Remark 12.5]{beskpi1}, Bessis suggests that our main theorem can be thought of as an analogue of the Kerékj\'art\'o Theorem, stating that every periodic homeomorphism of the disk is conjugate to a rotation. In this last section we establish a rephrasing of this analogy.

Recall that the regular Springer set (cf \cite[Definition 3]{shvartsman}) of an irreducible complex reflection group $W$ is the set of regular numbers for $W$ that are maximal with respect to divisibility. We say that a $d$-th root of $z_p$ is a \nit{maximal root} of $z_p$ if $d$ is in the regular Springer set of $W$.

\begin{prop}\label{kere}
Let $W$ be an irreducible complex reflection group. If $\gamma\in B(W)$ is an element admitting a central power, then we have $\gamma=\rho^p$ for $\rho$ a maximal root of $z_P$ and some integer $p$. In particular $\pi(\gamma)$ is a regular element in $W$.
\end{prop}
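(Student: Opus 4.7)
The idea is to exploit the centrality of $z_P$ to extract a root of $z_P$ from $\gamma$ via Bézout, and then upgrade it to a fundamental root using the main theorem.

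First, since $\gamma$ admits a central power and $Z(B(W))$ is cyclic with $z_B^{|Z(W)|}=z_P$ by \cite{dmm}, we obtain a relation $\gamma^n=z_P^m$ for some $n\geqslant 1$ and $m\in\Z$. Writing $k=n\wedge m$, $n=kn'$, $m=km'$, the element $\gamma^{n'}z_P^{-m'}$ (well defined as $z_P$ is central) satisfies $(\gamma^{n'}z_P^{-m'})^k=\gamma^n z_P^{-m}=1$, and since $B(W)$ is torsion-free as a Garside group, it is trivial. Hence
\[ \gamma^{n'}=z_P^{m'} \quad \text{with } n'\wedge m'=1. \]

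Second, Bézout yields $u,v\in\Z$ with $n'u+m'v=1$, and I set $\rho:=\gamma^v z_P^u$. A direct computation (parallel to Lemma \ref{key}, valid here because $z_P$ is central) gives $\rho^{n'}=z_P$ and $\rho^{m'}=\gamma$. Thus $\rho$ is an $n'$-th root of $z_P$, and property $(A)$ of the main theorem ensures that $n'$ is regular for $W$. Let $n''$ be the fundamental regular number associated to $n'$ (so $n'\mid n''$). Applying the Remark on fundamental roots to $\rho$, there exists a fundamental root $\rho^{*}\in B(W)$ with $(\rho^{*})^{n''}=z_P$ and $(\rho^{*})^{n''/n'}=\rho$. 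Substituting yields
\[ \gamma=\rho^{m'}=(\rho^{*})^{m'n''/n'}, \]
which is the desired expression of $\gamma$ as a power of a fundamental root of $z_P$.

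For the last assertion, by property $(B)$ of the main theorem $\bbar{\rho^{*}}$ is $\zeta_{n''}$-regular, so it admits a regular eigenvector $x\in X$ with $\bbar{\rho^{*}}\,x=\zeta_{n''}\,x$. Raising to the power $m'n''/n'$ gives $\bbar{\gamma}\,x=\zeta_{n''}^{\,m'n''/n'}\,x$, so $\bbar{\gamma}$ admits a regular eigenvector for some root of unity, and is therefore a regular element of $W$. The main theorem carries the bulk of the argument; the only step demanding some care is the reduction to coprime exponents, which proceeds cleanly via torsion-freeness of $B(W)$ and sidesteps the ``up to conjugacy'' clause of Proposition \ref{334}(a) precisely because $z_P$ is central.
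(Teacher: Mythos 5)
Your proof is correct, but it takes a genuinely different route from the paper's. The paper first assumes $B(W)$ is a Garside group in which $z_B$ is a power of $\Delta$, writes $\gamma^{pn}=z_B^{q}$, and extracts a root of $z_B$ via Proposition \ref{334} applied to the central Garside element $z_B=\Delta^n$; since that hypothesis fails for $G_{31}$ and $G_{15}$, it must then handle those two groups separately (through the regular embedding of $G_{31}$ into $G_{37}$ together with Lemma \ref{key}, and through isodiscriminantality of $G_{15}$ with $G(24,4,2)$). You instead work directly with $z_P$, which is central for every irreducible $W$, and replace the Garside input by the bare B\'ezout computation already underlying Lemma \ref{key}: reduce to coprime exponents via torsion-freeness, set $\rho=\gamma^v z_P^u$, verify $\rho^{n'}=z_P$ and $\rho^{m'}=\gamma$, then invoke property $(A)$ and the fundamental-root remark. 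What your approach buys is uniformity --- no case distinction for $G_{31}$ and $G_{15}$ --- at the price of leaning entirely on the already-established main theorem, whereas the paper's argument produces the root from the Garside structure itself. Two small points to tidy up: the justification ``torsion-free as a Garside group'' is not quite right for $B_{31}$, which is not given a Garside group structure in this paper (torsion-freeness holds because it embeds in $B_{37}$, and the paper uses it without comment in Lemma \ref{key}); and in the degenerate case $n'=1$ your final eigenvalue $\zeta_{n''}^{m'n''/n'}$ is $1$, so you should note that the identity is $\zeta_1$-regular for the last assertion to cover that case.
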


If $B(W)$ is a Garside group with a $\Delta$ such that $z_B$ is a power of $\Delta$, then an element of $B(W)$ admits a central power in $B(W)$ if and only if it is periodic in the Garside sense. So the two notions of periodic elements are the same. This also justifies the connection between Proposition \ref{kere} and the Kerékj\'art\'o Theorem.

\begin{proof} This statement is already known to hold for the infinite series by Lemma 4.1 and proposition 4.2. Indeed $\varepsilon$ (resp. $\lambda$) is the only fundamental root of $z_P$ in $B(de,1,n)$ (resp. $B(de,e,n)$) up to conjugacy. Let us first prove proposition \ref{kere} in the case where $B(W)$ is a Garside group, and $z_B=\Delta^n$. If $\gamma^p=\Delta^q$ in $B(W)$, then we have $\gamma^{pn}=\Delta^{qn}=z_B^q$.

If we note $k=(pn)\wedge q$, we have that $\gamma$ is conjugate to some $\ttilde{\gamma}$ such that $\ttilde{\gamma}^{\frac{pn}{k}}=z_B^{\frac{q}{k}}$. But we know that $z_B=\Delta^n$ is also a Garside element, which is central. Using proposition \ref{334}, there is some $\rho$ such that $\ttilde{\gamma}=\rho^{\frac{pn}{k}}$, where we have $\rho^{\frac{q}{k}}=z_B$. This settles the case of almost every irreducible group, the exception being $G_{31}$. Let $\delta$ be a $4$-th root of $z_P$ in $B_{37}$. We know that $B_{31}$ is isomorphic to $C_{B_{37}}(\delta)$. We have $Z(C_{B_{37}}(\delta))=\langle \delta\rangle$ (see \cite[Corollary 12.7]{beskpi1}), that is the isomorphism $C_{B_{37}}(\delta)\simeq B_{31}$ sends $\delta$ to $z_B$. The case of $B_{31}$ is then a consequence of Lemma \ref{key}.\end{proof}

We thank Referee \#1 for bringing to our attention the article \cite{shvartsman}, which studies the quotient group of the braid group of a complexified Coxeter group by its center. The description of periodic elements we provide in the case of an irreducible complex braid group allows us to give a generalization of \cite[Theorem B]{shvartsman}:

Let $W$ be an irreducible complex reflection group. As all fundamental regular numbers are divisible by the cardinality $|Z(W)|$ of the center of $W$, we can define the \nit{modified Springer set} as the set of elements of the regular Springer set divided by $|Z(W)|$.

\begin{prop}(Shvartsman)\newline Let $W$ be an irreducible complex reflection group. Let $\gamma$ be an element of finite order in the group $B(W)/Z(B(W))$. Then the order of $\gamma$ divides one of the numbers belonging to the modified Springer set of $W$. Conversely, for each divisor $t$ of an element of the modified Springer set, there is an element $\gamma\in B(W)/Z(B(W))$ having order $t$.
\end{prop}
\begin{proof}
Let $\gamma$ be an element of finite order. There is some periodic element $x\in B(W)$ having image $\gamma$ in the quotient $B(W)/Z(B(W))$. By Proposition \ref{kere}, $x$ is equal to some $\rho^p$ where $\rho$ is a maximal root of $z_P$ and $p$ is a positive integer. Since $\rho$ is a maximal root of $z_P$, there is an element $d$ of the regular Springer set such that $\rho^d=z_P$. The integer $d'=\frac{d}{|Z(W)|}$ is an element of the modified Springer set such that $\rho^{d'}=z_B$. Let $r$ be the image of $\rho$ in $B(W)/Z(B(W))$. We have $\gamma=r^{p}$, so the order of $\gamma$ is $\frac{p\vee d'}{p}=\frac{d'}{p\wedge d'}$, where $p\vee d'$ is the lcm of $p$ and $d'$, and $p\wedge d'$ is their gcd. So the order of $\gamma$ divides $d'$.

Conversely, let $d$ be an element of the regular Springer set, let $d'=\frac{d}{|Z(W)|}$ be the associated element of the modified Springer set. Let $\rho$ be a $d$-th root of $z_P$, it induces an element of order $d'$ in $B(W)/Z(B(W))$, so considering powers of $\rho$ gives the desired results.
\end{proof}

\newpage
\printbibliography
\end{document}